\documentclass{article}

\usepackage{tikz}
    \usetikzlibrary{positioning}
    \tikzset{main node/.style={circle,fill=blue!25,draw,minimum size=.5cm,inner sep=0pt}}
    \usetikzlibrary{cd}
\usepackage{amsmath,amsfonts,amssymb,amsthm,mathtools,pdflscape}
\usepackage{etaremune, enumerate, float, verbatim, booktabs, comment, enumitem, authblk}
    \newtheorem{theorem}{Theorem}[section]
    \newtheorem{proposition}[theorem]{Proposition}
       
    \newtheorem{corollary}[theorem]{Corollary}
    \newtheorem{question}[theorem]{Question}
    \newtheorem{observation}[theorem]{Observation}
    \newtheorem{lemma}[theorem]{Lemma}
    
    \newtheorem{conjecture}[theorem]{Conjecture}
    \newtheorem{example}[theorem]{Example}
    \theoremstyle{definition}
        \newtheorem{definition}[theorem]{Definition}
    \setlistdepth{9}
\usepackage[margin=1.25in]{geometry}
\usepackage[capitalise,nameinlink]{cleveref}

\newcommand{\zbar}{\overline{Z}(G)}
\newcommand{\zbarn}{\overline{Z}(G_n)}

\newcommand{\cl}{\operatorname{cl}}
\newcommand{\Term}{\operatorname{Term}}
\newcommand{\PT}{\operatorname{PT}}
\newcommand{\pt}{\operatorname{pt}}
\newcommand{\md}{\operatorname{md}}
\newcommand{\pd}{\operatorname{pd}}

\title{Maximizing the discrepancy between zero forcing parameters relative to graph order}

\author{Andrew McKay}
\author{Allie Ray}
\author{Jonathan Valliere}
\affil{Department of Mathematics and Computer Science, Wheaton College, IL}
\date{\vspace{-8pt}\today}

\begin{document}
\maketitle

\begin{abstract}\label{abstract}
    \noindent Zero forcing is a process described by a color change rule on the vertices of a graph. In this paper, we maximize the discrepancy between various zero forcing parameters relative to graph order. First, we find an upper bound on the difference in cardinality between minimal zero forcing sets (sets containing no proper zero forcing subset) of maximum and minimum size, and we show that this bound is sharp for an infinite family of graphs. Furthermore, we derive an upper bound for the discrepancy between the maximum and minimum propagation times of the minimum zero forcing sets of any graph, showing this bound is sharp for an infinite family of graphs.
\end{abstract}

\noindent \textbf{Keywords:} zero forcing, minimal zero forcing sets, propagation time \vspace{3pt}

\noindent \textbf{AMS subject classification:} 05C69, 05C57, 05C50

\section{Introduction}\label{intro}
Consider a simple, undirected graph $G=(V,E)$, where every vertex starts out either colored blue or uncolored. The \textit{zero forcing color change rule} states that, at every iteration, a blue vertex adjacent to exactly one uncolored vertex colors its neighbor blue, which we term forcing. If an initial set of blue vertices is able to color each vertex in the graph through repeated applications of the standard zero forcing color change rule, then the set is called a \textit{zero forcing set}, or \textit{Z-set}. The \textit{zero forcing number}, denoted $Z(G)$, is the minimum number of vertices needed to be colored blue initially such that each vertex in the graph will be colored blue. This parameter was first introduced in \cite{AIM} to bound the minimum rank of symmetric matrices whose off-diagonal non-zero pattern follows that of the adjacency matrix of a graph, signaling strong ties to linear algebra. Zero forcing also has applications in quantum theory, examining how the manipulation of a local particle group can impact an entire quantum network \cite{burgarth}. It is closely related to the concept of graph domination, used to efficiently monitor an entire electrical system \cite{haynes}, and has applications in computer science, such as through the fast-mixed network search model introduced in \cite{yang}.

In recent years, the study of zero forcing has been greatly expanded and new parameters have been introduced. One such parameter is propagation time, first formally introduced in \cite{chilakamarri} but significantly expanded upon in \cite{hogben}, effectively shifting the study of zero forcing from merely a question of set size to a temporal examination of the process. Propagation time of a Z-set is the number of iterations it takes for the set to completely color a graph. Building on this, the concept of throttling was subsequently introduced in \cite{butler} to minimize the sum of the size of a Z-set and its propagation time. Variations on the standard zero forcing color change rule have also been introduced, and current research now includes other color change rules such as skew forcing and positive semidefinite forcing \cite{ima, barioli}. This paper, though, deals exclusively with standard zero forcing.

Finding and analyzing minimal zero forcing sets, defined as Z-sets that contain no proper zero forcing subset, is among the most recent areas of study related to zero forcing. Minimal Z-sets were first introduced in \cite{brimkov} in an attempt to better understand the zero forcing process, specifically by examining the relationship between minimum Z-sets and minimal Z-sets of maximum size. In \cite{brimkov}, the maximum cardinality of a minimal Z-set is defined as $\zbar$. We answer an open question presented in \cite{brimkov} by maximizing the discrepancy, $\zbar - Z(G)$, relative to the order of a graph. Specifically, we find that for any graph $G$ of order $n\geq 17$, the maximum discrepancy is $\zbar - Z(G) = n-6$ (see Theorem~\ref{one theorem to rule them all}). We also apply some of the same strategies used in answering this question to bound the difference between \textit{maximum propagation time}, $\PT(G)$, and \textit{minimum propagation time}, $\pt(G)$, over minimum Z-sets. In \cite{hogben}, the relationship between these two parameters is briefly examined through \textit{propagation time intervals} and \textit{propagation time discrepancy}, though we extend this research by sharply bounding the propagation time discrepancy for any graph relative to its order. We show that for any graph $G$, $\PT(G) - \pt(G) \leq \left\lfloor n-2\sqrt{n}+1 \right\rfloor$ (see Theorem~\ref{pd upper bound}).

This paper is structured as follows. In Section \ref{prelims}, we more precisely define some graph theoretic and zero forcing notation that is essential to this paper. In Section \ref{zbar-z}, we bound the discrepancy between $\zbar$ and $Z(G)$ with respect to graph order and show that this bound is sharp for an infinite family of graphs. Next in Section \ref{pd}, we find an upper bound on the discrepancy between $\PT(G)$ and $\pt(G)$ for all graphs, showing that this bound is sharp for an infinite family of graphs. Finally in Section \ref{conclusion}, we conclude with some open questions for future research.

\section{Preliminaries}\label{prelims}

\subsection{Graph theory terminology}
A simple undirected graph $G = (V,E)$ has a vertex set $V(G)$ and an edge set $E(G)$, where $E(G)$ consists of unordered pairs of elements in $V(G)$. 
The \textit{order} of a graph $G$ is $n=|V(G)|$, and two vertices $v,w \in V(G)$ are \textit{adjacent} if $vw\in E(G)$. The \textit{degree} of a vertex $v$, denoted deg($v$), is the number of vertices to which it is adjacent, and the \textit{minimum degree} of a graph is ${\delta(G) = \text{min}\{deg(v) \; | \; v\in V(G)\}}$. A vertex $v$ is called a \textit{leaf} if deg($v$) $= 1$. 
The \textit{open neighborhood} of a vertex $v$, denoted $N(v)$, is the set consisting of all vertices to which $v$ is adjacent, while the \textit{closed neighborhood} of $v$ is defined $N[v] = \{v\} \cup N(v)$. 

Various operations allow us to obtain new graphs by combining existing ones. The \textit{union} of two graphs, $G$ and $G'$, is denoted $G \cup G'$. The \textit{join} of two graphs $G$ and $G'$, denoted $G \vee G'$, is obtained by adding an edge between every vertex $v \in V(G)$ and $w \in V(G')$ in $G \cup G'$. 
The \emph{complement} of a graph $G$, denoted $\overline{G}$, is composed of vertices $V(G)$ and edges $\{vw \; | \; v,w\in V(G) \text{ and } vw\notin E(G)\}$.
Two graphs $G \text{ and } H$ are \textit{isomorphic} if there exists a bijection $\varphi: V(G) \rightarrow V(H)$ such that $vw\in E(G)$ if and only if $\varphi(v)\varphi(w)\in E(H)$. If $G \text{ and } H$ are isomorphic, we say $G \cong H$. 
A \textit{subgraph} $G'$ of a graph $G$ has $V(G') \subseteq V(G)$ and $E(G') \subseteq E(G)$. An \textit{induced subgraph} $G'$ of a graph $G$ has $V(G') \subseteq V(G)$ and $E(G') \subseteq E(G)$ where $E(G') = \{wv \; | \; w,v\in V(G') \text{ and } wv\in E(G)\}$. 

The following are characteristics of graphs that will be of interest in this paper. A graph $G$ is \textit{outerplanar} if it can be embedded in a plane such that no edges cross and all vertices are incident to the exterior, unbounded face \cite{barioli}. In \cite[Definition 3.1]{johnson}, a graph $G$ is defined to be a \textit{graph of two parallel paths} if there exist two independent induced paths of G that cover all the vertices of G and such that any edges between the two paths can be drawn so as to not cross. This excludes a simple path, but does include a graph of two disjoint paths. By this definition, a graph on two parallel paths is outerplanar.

We use several common families of graphs as examples throughout this paper and summarize their definitions and notations here. For the following graphs of order $n$, we denote a \textit{path} as $P_n$, a \textit{cycle} as $C_n$, a \textit{complete bipartite graph} as $K_{r,s}$, where $r+s=n$, a \textit{complete graph} as $K_n$, and an \textit{empty graph} as $\overline{K}_n$. A \textit{tree} is a connected acyclic graph, and a \textit{branch point} $v$ of a tree is a vertex of degree greater than 2. A \textit{spider}, denoted $S_{a_1,a_2,...a_k}$, is a tree with exactly one branch point $v$, where $a_1,a_2,...a_k$ are the order of the connected components, called \textit{legs}, of $G-v$.

\subsection{Zero forcing terminology}

We continue by defining some terminology directly related to the zero forcing color change rule. In Section \ref{illustration}, we include an example to illustrate many of these definitions. The zero forcing color change rule states that, at every iteration, a blue vertex adjacent to exactly one uncolored vertex colors its neighbor blue, which we term \emph{forcing}. If a vertex $v$ forces $w$, we denote this as $v \rightarrow w$. Consider the following definitions on a graph $G$.

\begin{definition}
The \emph{closure} of a set $S \subseteq V(G)$, denoted $\cl(S)$, is the set of blue-colored vertices of $G$ after iteratively applying the zero force rule until further applications no longer color any new vertices. 
\end{definition}

Considering the upper bound on $\cl(S)$, we define the following.

\begin{definition}
    The set $S$ is a \emph{zero forcing set}, or \emph{Z-set}, if $\cl(S) = V(G)$. Otherwise, we call $S$ a \emph{failed zero forcing set}. Moreover, we define a \emph{minimal zero forcing set} to be a zero forcing set that contains no proper subsets which are themselves zero forcing sets.
\end{definition}

We define the following parameters based on the size of the zero forcing sets.

\begin{definition}
    A zero forcing set of minimum cardinality is a \emph{minimum zero forcing set}. The \emph{zero forcing number}, denoted $Z(G)$, is the cardinality of a minimum zero forcing set. The \emph{maximum size of a minimal zero forcing set} is denoted $\zbar$.
\end{definition}

Note that every minimum zero forcing set is minimal, but the converse is not true. We focus much of our work in this paper on the relationship between $Z(G)$ and $\zbar$, namely the difference between them. In essence, this is the discrepancy between minimal Z-sets of maximum and minimum size. We define this quantity as follows:

\begin{definition}\label{def2.4}
    The \emph{minimal zero forcing set discrepancy} of a graph is \[\md(G) = \zbar - Z(G).\]
\end{definition}

For more on minimal Z-sets and the relationship between $Z(G)$ and $\zbar$, see \cite{brimkov}. Next, we define some terms that are important when considering the zero forcing process.

\begin{definition} For a zero forcing set $S$ of a graph $G$, let $\mathcal{F}$ be the \emph{set of forces} of $S$.
\begin{itemize}
    \item A \emph{forcing chain} of $\mathcal{F}$ is a sequence of vertices $\{v_1, v_2, \dots, v_k\}$ such that $v_i$ forces $v_{i+1}$ for $i = 1, 2, \dots, k-1$. The \emph{length} of a forcing chain is $k-1$.
    \item A \emph{maximal forcing chain} is a forcing chain that is not a proper sub-chain of any other forcing chain. 
    \item The \emph{terminus} of $\mathcal{F}$, denoted $\Term(\mathcal{F})$, is composed of the vertices in $V(G)$ that do not perform a force in $\mathcal{F}$.
\end{itemize}
\end{definition}

Note that any set of forces of a graph $G$ will yield at least $Z(G)$ maximal forcing chains. Also using the notation above, note that the terminus of $\mathcal{F}$ includes the last vertex $v_k$ in any maximal forcing chain of $\mathcal{F}$.

Another way to examine the zero forcing process is to consider when certain vertices cannot be forced, seen in the following definition, and first introduced in \cite{fast}.

\begin{definition}
    A \emph{fort} is a non-empty set $F \subseteq V(G)$ such that no vertex $v \in V(G) \setminus F$ is adjacent to exactly one vertex in $F$.
\end{definition}

Note that for any subset of vertices $S \subseteq V(G)$, $\overline{\cl(S)}$ is a fort. Next, we define the following term that will be useful in choosing Z-sets to prove many of our results.

\begin{definition}
    The \emph{closed colored neighborhood} of a colored vertex $v$ under a zero forcing set $S$ is \[N_c[v] = N[v] \cap S.\]
\end{definition}

\begin{example}\label{ex1}
Let $G$ be the graph in Figure \ref{example1}, and define a Z-set $S=\{v_2,v_3,v_4\}$. Observe that $N_c[v_4]=\{v_3,v_4\}$.
\end{example}

\begin{figure}[H]
\begin{center}
\begin{tikzpicture}[scale=.65, transform shape]
    \node[main node, fill=white] (1) {$v_1$};
    \node[main node] (2) [right = 1.5cm of 1] {$v_2$};
    \node[main node] (3) [right = 1.5cm of 2] {$v_3$};
    \node[main node] (4) [below = 1.5cm of 3] {$v_4$};
    \node[main node, fill=white] (5) [below = 1.5cm of 2] {$v_5$};  
    
    \path[draw,thick]
    (1) edge node {} (2)
    (2) edge node {} (3)
    (3) edge node {} (4)
    (4) edge node {} (5)
    (5) edge node {} (1)
    (5) edge node {} (2)
    (5) edge node {} (3);
\end{tikzpicture}
\caption{A graph $G$ for Example \ref{ex1}.}
\label{example1}
\end{center}
\end{figure}

Finally, we define zero forcing parameters related to propagation.

\begin{definition}
The \emph{propagation time} of a zero forcing set is the number of iterations of the zero forcing rule needed to color the entire graph.
\begin{itemize}
    \item The \emph{minimum propagation time}, denoted $\pt(G)$, is the smallest propagation time across all minimum zero forcing sets.
    \item The \emph{maximum propagation time}, denoted $\PT(G)$, is the largest propagation time across all minimum zero forcing sets.
\end{itemize}
\end{definition}

Note that the propagation time of the set $V(G)$ is 0. The parameter representing the difference between these two quantities was first established in \cite{hogben} and was defined as follows.

\begin{definition}
    The \emph{propagation time discrepancy} of a graph is \[\pd(G) = \PT(G) - \pt(G).\]
\end{definition}

For more on zero forcing propagation time, see \cite{hogben}.

\subsection{Illustrating parameters on an example graph}\label{illustration}

To illustrate some of these definitions, we will examine various parameters on an example graph $G$ (see Figure \ref{example}).

\begin{figure}[H]
\begin{center}
\begin{tikzpicture}[scale=.65, transform shape]
    \node[main node, fill=white] (1) {$v_1$};
    \node[main node, fill=white] (2) [right = 1cm of 1] {$v_2$};
    \node[main node, fill=white] (3) [right = 1cm of 2] {$v_3$};
    \node[main node, fill=white] (4) [right = 1.3cm of 3] {$v_4$};
    \node[main node, fill=white] (5) [right = 2.3cm of 4] {$v_5$};   
    \node[main node, fill=white] (6) [above right = 0.65cm and 0.10cm of 4] {$v_6$};
    \node[main node, fill=white] (7) [above = 1.5cm of 3] {$v_7$};
    
    \path[draw,thick]
    (1) edge node {} (2)
    (2) edge node {} (3)
    (3) edge node {} (4)
    (4) edge node {} (5)
    (5) edge node {} (6)
    (6) edge node {} (7)
    (3) edge node {} (7)
    (4) edge node {} (7);   
\end{tikzpicture}
\caption{Graph $G$ for Section \ref{illustration}.}
\label{example}
\end{center}
\end{figure}

The parameters and their corresponding values are listed below in Table \ref{params}. Also included for each relevant parameter is an example set of vertices of $G$ that yields the listed value.

\begin{table}[H]
    \centering
    \caption{Parameters and example yielding sets of $G$.}
    \vspace{8pt}
    \label{params}
    \begin{tabular}{l l l}
        \toprule
        \textbf{Parameter} & \textbf{Value} & \textbf{Yielding Set} \\
        \midrule
        $Z(G)$ & 2 & $\{v_1, v_7\}$ \\
        $\zbar$ & 3 & $\{v_3, v_4, v_7\}$ \\
        $\md(G)$ & 1 & --- \\
        $\pt(G)$ & 4 & $\{v_1, v_4\}$ \\
        $\PT(G)$ & 5 & $\{v_4, v_5\}$ \\
        $\pd(G)$ & 1 & --- \\
        \bottomrule
    \end{tabular}
\end{table}

First observe that the Z-set $\{v_1, v_7\}$ yields the set of forces $\{v_1 \rightarrow v_2, v_2 \rightarrow v_3, v_3 \rightarrow v_4, v_4 \rightarrow v_5, v_7 \rightarrow v_6\}$, where the last two forces occur at the same time step. The forcing chains, $F_1 \ \text{and} \ F_2$, are then $\{v_1, v_2, v_3, v_4, v_5\} \ \text{and} \ \{v_7, v_6\}$, respectively. Note that the yielding sets in Table \ref{params} are not necessarily unique. For example, the sets $\{v_1, v_7\} \text{ and } \{v_6,v_7\}$ are two minimum Z-sets of $G$ among many. Additionally, see that not all minimal Z-sets of a graph are necessarily of the same cardinality, and that minimum Z-sets can yield a range of propagation times.

\section{Maximizing minimal zero forcing set discrepancy}\label{zbar-z}

 In this section, we seek to answer a question posed in \cite[Question 4.3]{brimkov}: What is the largest possible gap between $\zbar$ and $Z(G)$? Recall from Definition \ref{def2.4} that we term this gap to be the minimal zero forcing set discrepancy, defined as $\md(G)=\zbar-Z(G)$. In \cite{brimkov}, an infinite family of graphs that each satisfy $\md(G) = n-7$ is presented. It is also clear to see that any graph $G$ satisfies $\md(G) \leq n-1$, as $\zbar \leq n$ and $Z(G) \geq 1$. We further investigate the maximum size of this gap, and will prove the following results:
\begin{itemize}
    \item The minimal zero forcing set discrepancy can be arbitrarily large. For any nonnegative integer $x$, there is a graph that yields $\md(G) = x$.
    \item Minimal zero forcing set discrepancies up to $n-1$ can be achieved, but only by certain graphs of small order.
    \item When restricted to graphs of order $n\geq 17$, the maximum value of the minimal zero forcing set discrepancy as a function of the order is $\md(G) = n-6$.
\end{itemize}

The first result we will demonstrate in Corollary \ref{allmd}, and the second we will briefly examine in Observation \ref{finitefam}. The third result is of most significance and the one to which we will dedicate most of this section. It prompts the exploration of graphs of arbitrarily large order that have a large minimal Z-set discrepancy relative to $n$. We first introduce a proposition that will be used towards this end.

\begin{proposition}\label{obs}
    If $G$ is a graph of order $n$ satisfying $\zbar \geq n-1$, then $\md(G) = 0$.
\end{proposition}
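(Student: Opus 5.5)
The plan is to show that the hypothesis $\zbar \geq n-1$ forces $G$ to be either the empty graph $\overline{K}_n$ or the complete graph $K_n$. For these two graphs every minimal Z-set has the same size, so $\md(G)=0$. The argument splits on whether $\zbar = n$ or $\zbar = n-1$.

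\emph{Case $\zbar = n$.} Here $V(G)$ itself is a minimal Z-set, so for every $v$ the set $V(G)\setminus\{v\}$ is a failed zero forcing set. If some $v$ had a neighbor $u$, then in $V(G)\setminus\{v\}$ the vertex $u$ would be blue with exactly one uncolored neighbor, namely $v$. So $u \rightarrow v$, a contradiction. Hence $G$ has no edges, $G\cong\overline{K}_n$, and $Z(G)=n=\zbar$.

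\emph{Case $\zbar = n-1$.} Assume $n\ge 2$; when $n=1$ we have $\zbar = n$ and we are in the previous case. Let $S=V(G)\setminus\{v\}$ be a minimal Z-set. Since $S$ is a Z-set, $v$ has some neighbor $u$. By minimality, for every $w\neq v$ the set $V(G)\setminus\{v,w\}$ fails. Its complement $\{v,w\}$ is uncolored and no force can ever occur, so $\{v,w\}$ is a fort. That is, every vertex $x\notin\{v,w\}$ is adjacent to both or neither of $v,w$. Equivalently, $N(v)\setminus\{w\} = N(w)\setminus\{v\}$, so every $w$ is a (closed or open) twin of $v$.

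From this twin condition I will deduce $G\cong K_n$. Applying it to $w=u$ gives $N[u]=N[v]$. Suppose some $w\neq v$ is not adjacent to $v$. Then $N(w)=N(v)\ni u$, so $w$ is adjacent to $u$. But then $w\in N[u]=N[v]$, a contradiction. Thus $v$ is adjacent to every other vertex. The twin condition then gives $N[w]=N[v]=V(G)$ for every $w$, so $G\cong K_n$. Since $Z(K_n)=n-1=\zbar$, we get $\md(G)=0$.

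The only step requiring real care is turning the minimality of $V(G)\setminus\{v\}$ into the fort/twin condition and then ruling out non-neighbors of $v$. Once this structural step is in place, the remaining claims are standard facts about $K_n$ and $\overline{K}_n$.
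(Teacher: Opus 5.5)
Your case $\zbar = n$ is correct, but the case $\zbar = n-1$ rests on a false structural claim: $\zbar = n-1$ does not force $G\cong K_n$. For instance, $K_2\cup K_1$ has $\zbar = 2 = n-1$, since its minimal Z-sets are the isolated vertex together with one endpoint of the edge. More generally, $K_m\cup kK_1$ with $m\ge 2$ has $\zbar = n-1$, and this is exactly the characterization the paper imports from Brimkov and Carlson.

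The faulty step is your assertion that no force can ever occur from $V(G)\setminus\{v,w\}$, so that $\{v,w\}$ is a fort. If $w$ is isolated, then starting from $V(G)\setminus\{v,w\}$ the vertex $u$ does force $v$. The set fails only because $w$ can never be colored. So $\{v,w\}$ is not a fort ($u$ has exactly one neighbor in it), and your deduction $N(w)=N(v)\ni u$ breaks down precisely for isolated $w$. The conclusion $\md(G)=0$ is still true, but your proof does not cover these graphs.

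The fort argument is sound for non-isolated $w$. If $v$ were ever forced, any neighbor of $w$ would then force $w$. If $w$ were ever forced, $u$ would then force $v$. Either way the set would not fail, so no force occurs and $\{v,w\}$ is a fort.

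Restricting your twin argument to non-isolated $w\neq v$ then shows that every such $w$ is adjacent to $v$ and has closed neighborhood $N[v]$. Hence the non-isolated vertices form a clique, and $G\cong K_m\cup kK_1$ with $m=|N[v]|\ge 2$. You still need to check $Z(K_m\cup kK_1)=(m-1)+k=n-1$, which uses the fact that every isolated vertex lies in every Z-set.

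With this repair, your argument becomes a self-contained replacement for the structural results the paper cites from Brimkov and Carlson.
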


\begin{proof}
    If $\zbar = n$, then $G\cong \overline{K}_n$ by \cite[Observation 2.1]{brimkov}. Furthermore, if $\zbar = n-1$, then $G \cong K_m \cup kK_1$ by \cite[Proposition 2.2]{brimkov}, where $k$ is a nonnegative integer and $m = n-k \geq 2$. In either case, by \cite[Corollary 2.4]{brimkov} and the fact that every Z-set must contain all isolated points, we see that $Z(G) = \zbar$ and thus $\md(G) = 0$
\end{proof}

As mentioned previously, there exist graphs of small order that achieve high minimal set discrepancies relative to graph order. Although these are of little significance to our primary investigation, a brief observation may prove useful in understanding these edge cases.

\begin{observation}\label{finitefam}
By Proposition \ref{obs}, we note that there exist graphs of small order that satisfy $\md(G)=0$ and thus trivially yield $\md(G) > n-6$. Examples include $K_n$, $\overline{K}_n$, and $C_n$ for $n \leq 5$. The graph $K_1$, for example, satisfies $\md(K_1) = 0 = n-1$, a trivially high discrepancy relative to graph order. Also, note that paths of order $4 \leq n \leq 6$ yield $\md(G)=1 > n-6$. 
\end{observation}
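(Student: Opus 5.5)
The statement is a list of concrete examples, so the plan is to verify each example directly, computing $Z(G)$ and $\zbar$ for each family.

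\emph{Zero discrepancy.} For $K_n$ and $\overline{K}_n$, Proposition~\ref{obs} applies directly. For $\overline{K}_n$ we have $\zbar=n$, since the whole vertex set is the only Z-set. For $K_n$ with $n\geq 2$ we have $\zbar=n-1$, since every set of $n-1$ vertices forces the last vertex. In both cases $\md(G)=0$. Since $0>n-6$ exactly when $n\leq 5$, these small members give discrepancies exceeding $n-6$. The case $K_1$ gives $\md(K_1)=0=n-1$, because $Z(K_1)=\zbar=1$.

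\emph{Cycles.} For $C_n$ with $n\leq 5$, I would show that every minimal Z-set has size $2$. Two adjacent vertices force around the cycle, so $Z(C_n)=2$. Now suppose $S$ is a minimal Z-set. It contains no pair of adjacent vertices, since such a pair would already be a proper zero forcing subset. A blue vertex can force only if one of its two neighbors is blue, so a set with no adjacent pair performs no forces at all. Hence $S$ would have to be all of $V(C_n)$, which contains an adjacent pair, a contradiction. Therefore every minimal Z-set is a pair of adjacent vertices, $\zbar=Z(C_n)=2$, and $\md(C_n)=0>n-6$ for $n\leq 5$.

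\emph{Paths.} For $P_n$ with $4\leq n\leq 6$, we have $Z(P_n)=1$, realized by an endpoint. To show $\zbar\geq 2$, label the path $v_1,\dots,v_n$ and take $S=\{v_2,v_3\}$. This pair forces outward in both directions, so it is a Z-set. Neither singleton is a Z-set, because an interior vertex of a path has two uncolored neighbors and cannot force. Hence $S$ is minimal and $\zbar\geq 2$.

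For the upper bound $\zbar\leq 2$, let $S$ be a minimal Z-set. It contains no endpoint, since an endpoint alone is a Z-set. Some vertex of $S$ must force, and an interior vertex forces only if it has a blue neighbor, so $S$ contains an adjacent pair. For $n\leq 6$, any adjacent pair of interior vertices is itself a Z-set. Minimality then forces $S$ to equal that pair, so $|S|=2$. This gives $\md(P_n)=1>n-6$ for $4\leq n\leq 5$. At $n=6$ we get $\md=1>0$ only if the inequality in the statement is read as $\geq$, i.e.\ $1>n-6$ requires $n\leq 6$, which holds.

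The main obstacle is only this last step, the minimality check on paths: one needs $S$ to contain an adjacent interior pair and that pair alone to be forcing. Both follow from the local forcing rule.
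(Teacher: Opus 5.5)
Your proposal is correct, and it takes a different route from the paper. The paper gives no separate argument: it credits the zero-discrepancy examples to Proposition~\ref{obs} and simply asserts the values for paths. You instead compute $Z$ and $\overline{Z}$ directly for each family.

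Your direct computation matters for the cycles. Proposition~\ref{obs} needs $\overline{Z}(G)\geq n-1$, which holds for $\overline{K}_n$, $K_n$ and $C_3=K_3$. It fails for $C_4$ and $C_5$, where $\overline{Z}(C_4)=2=n-2$ and $\overline{Z}(C_5)=2=n-3$. Your argument that every minimal Z-set of a cycle is an adjacent pair is what actually establishes $\md(C_4)=\md(C_5)=0$. That argument rests on the fact that a set with no adjacent pair performs no force and is not all of $V$. It gives $\md(C_n)=0$ for every $n\geq 3$; the cutoff $n\leq 5$ comes only from the inequality $0>n-6$.

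Your path argument likewise does not need $n\leq 6$. Any adjacent pair in any path is a Z-set, so the minimal Z-sets of $P_n$ are the two singleton endpoints and the adjacent interior pairs. Hence $\md(P_n)=1$ for all $n\geq 4$, which matches the paper's later use of $\overline{Z}(P_n)=2$. Again, only $1>n-6$ restricts the claim to $n\leq 6$.

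A few small slips are worth fixing:
\begin{itemize}
\item Your remark about $n=6$ is muddled. There $n-6=0$ and $1>0$ holds strictly, so there is no need to read the inequality as $\geq$.
\item In the path upper bound, a minimal Z-set can be a single endpoint. Say ``either $S$ is a single endpoint, of size $1$, or $S$ contains no endpoint,'' rather than ``$S$ contains no endpoint.''
\item For $K_n$, to conclude that an $(n-1)$-subset is a minimal Z-set (so $\overline{Z}(K_n)\geq n-1$), add that no $(n-2)$-subset is a Z-set. Under such a subset every blue vertex has two uncolored neighbors, so $Z(K_n)=n-1$.
\end{itemize}
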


In the remainder of Section \ref{zbar-z}, we turn our focus back towards graphs of arbitrarily large order, showing that $\md(G) = n-6$ is the maximum achievable discrepancy for these graphs.

\subsection{Lower bound on the maximum discrepancy relative to graph order}

An infinite family of graphs is established in \cite[Proposition 2.10]{brimkov} such that each graph $G$ in the family satisfies $\md(G) = n-7$. The family presents an initial lower bound on the maximum value of the minimal Z-set discrepancy. We first sharpen this bound by altering their example to produce an infinite family of graphs $\mathcal{G}$ such that each $G_n\in \mathcal{G}$ satisfies $\md(G_n) = n-6$.

\begin{proposition}\label{n-6 family}
    There exists an infinite family of graphs $\mathcal{G} = \{G_n\}$ such that $\md(G_n) = n-6$ for all order $n \geq 6$.
\end{proposition}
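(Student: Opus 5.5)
The plan is to give an explicit family in which $Z(G_n)$ is a small constant and $\zbar[G_n]$ is large. I am aiming for $Z(G_n)=2$ and $\overline{Z}(G_n)=n-4$. I will not derive the family from scratch. Instead I will start from the family of \cite[Proposition 2.10]{brimkov}, which has $\md=n-7$, and modify its fixed ``gadget'' part so that one fewer vertex lies outside the large minimal set. The rest of the graph will be a long induced path, or two parallel paths, whose length grows with $n$. The resulting graph should be a graph of two parallel paths in the sense of \cite{johnson}; such graphs have zero forcing number at most $2$.

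The key steps, in order, are the following.

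\begin{enumerate}
\item \emph{The lower bound $Z(G_n)\ge 2$.} Since $G_n$ is not a path, $Z(G_n)\ge 2$; recall that $Z(G)=1$ exactly for paths.
\item \emph{The upper bound $Z(G_n)\le 2$.} I will name two endpoint vertices and trace the forcing chains along the two paths, so that $Z(G_n)=2$.
\item \emph{A minimal set of size $n-4$.} I will exhibit a set $S$ with $|S|=n-4$ and check that it is a Z-set by listing the forces into the four uncolored vertices. To see that $S$ is minimal, for each $v\in S$ I will exhibit a fort $F_v$ with $F_v\cap(S\setminus\{v\})=\emptyset$. Then $\cl(S\setminus\{v\})$ misses $F_v$, so $S\setminus\{v\}$ fails. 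Because the path part is long, most of these forts are uniform: the vertex $v$, together with a few gadget vertices, forms a fort. Only the $O(1)$ vertices near the gadget need separate treatment. This gives $\overline{Z}(G_n)\ge n-4$.
\end{enumerate}

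The main obstacle is the reverse inequality $\overline{Z}(G_n)\le n-4$, that is, ruling out minimal Z-sets of size $n-3$ and $n-2$. The cases $n-1$ and $n$ are already excluded by Proposition~\ref{obs}, since $\md(G_n)\neq 0$.

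\begin{itemize}
\item \emph{Size $n-2$.} Suppose $S$ is a minimal Z-set with $|S|=n-2$, missing $\{x,y\}$. If some vertex of $S$ has a neighbor in $\{x,y\}$ and could force even after being removed, then minimality fails. The approach is to show that in $G_n$ any two uncolored vertices leave some vertex $w\in S$ such that $S\setminus\{w\}$ is still a Z-set. Typically $w$ is a vertex of degree at most $2$ on the long path, all of whose neighbors lie in $S$: such a $w$ is forced back by a path neighbor.
\item \emph{Size $n-3$.} For $|S|=n-3$ the same idea applies. By pigeonhole, a long path contains a segment of at least three consecutive vertices of $S$. Removing the middle vertex of that segment keeps a Z-set, because an end neighbor of the segment, all of whose other neighbors are colored, re-forces it. So minimal sets of size $n-3$ can only concentrate their complement so as to avoid this, and a short case analysis on the gadget vertices should exclude the remaining configurations.
\end{itemize}

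I would first try to make this ``remove an interior path vertex'' argument uniform. That reduces everything to a finite check on the gadget, which I would carry out by hand; if necessary, small $n$ (down to $n=6$) can be checked directly.
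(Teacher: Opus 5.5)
\textbf{The target parameters cannot be achieved.} The proposal never specifies the graph, and that construction is the whole content of an existence statement. More seriously, the target $Z(G_n)=2$, $\overline{Z}(G_n)=n-4$ is impossible for all large $n$, so no modification of a gadget can produce it.

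Here is why. If $Z(G)=2$ and $G$ is connected, then $G$ is a graph on two parallel paths, hence outerplanar, hence $K_{2,3}$-free. So any two vertices have at most two common neighbours, and $G$ has at most four leaves (the path endpoints). Let $S$ be a minimal Z-set and $U=V(G)\setminus S$ with $|U|=4$.
\begin{enumerate}
\item By Proposition~\ref{reforce}, every vertex of $S$ has a neighbour in $U$.
\item At most $2\binom{4}{2}=12$ vertices of $S$ have two or more neighbours in $U$. So at least $n-16$ vertices of $S$ have exactly one $U$-neighbour.
\item For $n\ge 33$, some $u\in U$ is the unique $U$-neighbour of a set $A_u$ with $|A_u|\ge 5$.
\item Some $r\in A_u$ is not a leaf, so $r$ has a neighbour $t\in S$.
\item Since $t$ and $u$ have at most two common neighbours, there is $q\in A_u\setminus N[t]$.
\item In $S\setminus\{t\}$, the only uncoloured neighbour of $q$ is $u$, so $q\to u$. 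Then $r\to t$. Hence $S\subseteq \cl(S\setminus\{t\})$, and $S$ is not minimal.
\end{enumerate}
Therefore $Z(G)=2$ and $n\ge 33$ force $\overline{Z}(G)\le n-5$, i.e.\ $\md(G)\le n-7$. (The disconnected case has $\overline{Z}(G)\le 4$.)

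\textbf{Your upper-bound step conflicts with your lower-bound step.} A minimal set of size $n-4$ requires every coloured vertex to see an uncoloured one. So the long-path vertices cannot be plain degree-$2$ vertices. Yet your argument excluding size $n-3$ uses exactly such vertices to re-force a removed neighbour. If that argument worked in your graph, it would equally exclude size $n-4$. If it does not work, you have no upper bound.

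\textbf{The missing idea.} Many coloured vertices must each see \emph{two} uncoloured vertices, so that none of them can force until one special vertex acts. This is a $K_{2,m}$ configuration, which outerplanar graphs forbid, so you must leave $Z=2$. The paper takes $G_n=(P_2\cup K_1)\vee P_{n-3}$ and $S'=\{v_1,v_4,\dots,v_n\}$:
\begin{itemize}
\item Every path vertex is adjacent to both uncoloured vertices $v_2$ and $v_3$, so only $v_1$ can start, by forcing $v_2$; after that the path vertices force $v_3$.
\item Deleting any vertex of $S'$ either uncolours $v_1$ or gives it a second uncoloured neighbour, so nothing can force. Hence $S'$ is minimal, of size $n-2$.
\item $Z(G_n)=4$ follows from $Z\ge\delta(G_n)=4$ for $n\ge 7$, plus a fort argument at $n=6$.
\item The characterization behind Proposition~\ref{obs} gives $\overline{Z}(G_n)\le n-2$.
\end{itemize}
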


\begin{proof}
Let $G_n$ be the graph $(P_2 \cup K_1) \vee P_{n-3}$ for $n \geq 6$, with vertices labeled as in Figure \ref{n-6}.

\vspace{5pt}
\begin{figure}[H]
\begin{center}
\begin{tikzpicture}[scale=.65, transform shape]
    \node[main node, fill=white] (1) {$v_4$};
    \node[main node, fill=white] (2) [right = 1cm of 1] {$v_5$};
    \node[main node, fill=white] (3) [right = 1cm of 2] {$v_6$};
    \node[main node, fill=white] (4) [right = 1cm of 3] {$v_7$};
    \node[main node, fill=white] (5) [right = 1cm of 4] {};   
    \node[main node, fill=white] (6) [right = 1cm of 5] {$v_n$};
    \node[main node, fill=white] (7) [above = 1cm of 3] {$v_1$};
    \node[main node, fill=white] (8) [above right = 1.15cm and 0.25cm of 4] {$v_2$};
    \node[main node, fill=white] (9) [below right = 1cm and 0.4cm of 3] {$v_3$};

    \path[draw,thick]
    (1) edge node {} (2)
    (2) edge node {} (3)
    (3) edge node {} (4)
    (5) edge node {} (6);
    \path[draw,thick]
    (7) edge node {} (1)
    (7) edge node {} (2)
    (7) edge node {} (3)
    (7) edge node {} (4)
    (7) edge node {} (5)
    (7) edge node {} (6)
    (8) edge node {} (1)
    (8) edge node {} (2)
    (8) edge node {} (3)
    (8) edge node {} (4)
    (8) edge node {} (5)
    (8) edge node {} (6)
    (9) edge node {} (1)
    (9) edge node {} (2)
    (9) edge node {} (3)
    (9) edge node {} (4)
    (9) edge node {} (5)
    (9) edge node {} (6)
    (7) edge node {} (8);
    \path (4) -- node[xshift=0cm] {$\dots$} (5);
\end{tikzpicture}
\caption{The graph $G_n$ satisfying $\md(G_n) = n-6$.}
\label{n-6}
\end{center}
\end{figure}

\vspace{-8pt}
First, observe that $S = \{v_1, v_2, v_3, v_4\}$ is a Z-set of size 4. It is a well known fact that $Z(G_n) \geq \delta(G_n)$, and thus when $n\geq 7$, $\delta(G_n) = 4$ and $S$ is a minimum Z-set. When $n=6$, $\delta(G_6) = 3$ and there is exactly one vertex with this degree, namely $v_3$. Thus, for a Z-set of size 3 to exist in $G_6$, it would need to be composed of $v_3$ and exactly two neighbors of $v_3$. But in $G_6$, $F = \{v_1, v_2\}$ is a fort and no vertices in $N[v_3]$ intersect $F$, thus no subset of $N[v_3]$ is a Z-set \cite[Proposition 2.2]{alameda}. Therefore $S$ is a minimum Z-set of size 4 for all $G_n\in \mathcal{G}$, and $Z(G_n) = 4$.

Consider also the Z-set $S' = \{v_1, v_4, v_5, v_6,... v_n\}$ of size $n-2$. Under this set, $v_1$ is the only vertex with exactly one uncolored neighbor, namely $v_2$. Each other vertex in $S'$ has two uncolored neighbors: $v_2$ and $v_3$.  Because each vertex in $\{v_3,v_4,...,v_n\}$ is adjacent to $v_1$, removing any vertex from $S'$ would result in a failed Z-set. Either $v_1$ would be uncolored, or it would have more than one uncolored neighbor and be unable to force. Thus, $S'$ is a minimal Z-set of size $n-2$.

Since $\md(G_n) > 0$ for $n\geq 7$, the contrapositive of Proposition \ref{obs} states that $\zbarn < n-1$ and thus $S'$ is indeed a minimal Z-set of maximum size. Similarly, when $n=6$, it can be seen that $G_6 \ncong \overline{K}_n$ and $G_6 \ncong K_m \cup kK_1$ where $k$ is a nonnegative integer and $m = n-k \geq 2$. Thus by the proof of Proposition \ref{obs}, $\overline{Z}(G_6) < n-1$ and $S'$ is a minimal Z-set of maximum size. Therefore $\zbarn = n-2$.

Thus for each $G_n\in \mathcal{G}$, \[\md(G_n) = \zbarn - Z(G_n) = n-6.\qedhere\]
\end{proof}

By producing this family, we also see that the value $\md(G_n)$ can be arbitrarily large.

\begin{corollary}\label{allmd}
    For any nonnegative integer $x$, there exists a graph $G_n$ for which $\md(G_n) = x$.
\end{corollary}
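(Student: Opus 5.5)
The plan is to read this off Proposition \ref{n-6 family}, covering the few small values of $x$ that family misses with explicit small graphs. For $x \geq 0$ with $x+6 \geq 6$, take $n = x+6$. Then $n \geq 6$, so the graph $G_n = (P_2 \cup K_1) \vee P_{n-3}$ of Proposition \ref{n-6 family} exists and satisfies $\md(G_n) = n-6 = x$. Since every nonnegative integer $x$ gives $n = x+6 \geq 6$, this single substitution handles all cases at once.

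Two details need checking. First, the case $x=0$ corresponds to $G_6$, and the proof of Proposition \ref{n-6 family} treats $n=6$ separately. There it shows $Z(G_6)=4$ via the fort $\{v_1,v_2\}$, and $\overline{Z}(G_6)=4$ because $\overline{Z}(G_6) < n-1$. So $\md(G_6)=0$ is indeed established there. Second, for $x=0$ we could alternatively cite Proposition \ref{obs} with $K_1$ or $\overline{K}_n$, as in Observation \ref{finitefam}, which gives $\md=0$ directly. I would mention this as a sanity check in case a reader doubts the $n=6$ case.

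No step is a real obstacle. The only care needed is confirming that the proposition's range $n \geq 6$ covers exactly $x \geq 0$, and that the $n=6$ member was genuinely verified rather than assumed. Writing it out amounts to stating $n=x+6$ and invoking Proposition \ref{n-6 family}.
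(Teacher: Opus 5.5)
Your proposal is correct and matches the paper's proof: set $n = x+6$ and apply Proposition~\ref{n-6 family}, whose range $n \geq 6$ covers every $x \geq 0$, including the separately treated case $n=6$ that gives $\md(G_6)=0$. Your opening remark about needing extra small graphs is unnecessary, as you note yourself, though citing $K_1$ for $x=0$ is a harmless sanity check.
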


\begin{proof}
    Let $G_n$ be a graph in the family $\mathcal{G}$ from Proposition \ref{n-6 family}, and let $n=x+6$. Then,\[\md(G_{x+6}) = (x+6)-6 = x. \qedhere\]
\end{proof}

\subsection{Upper bound on the maximum discrepancy relative to graph order}

Though Proposition \ref{n-6 family} sharpens the lower bound, the question still remains: What is the largest possible value for minimal Z-set discrepancy? We will begin to show that the maximum value is in fact $n-6$ by first presenting an initial upper bound on the parameter, namely $n-4$.

\begin{proposition}\label{initial bound}
    If $G$ is a graph of order $n\geq 5$, then $\md(G) \leq n-4$.
\end{proposition}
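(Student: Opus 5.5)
We need to show $\md(G) \leq n-4$ for $n \geq 5$, i.e.\ $\zbar - Z(G) \leq n-4$. The plan is a case split on $\zbar$, using Proposition~\ref{obs} to dispose of the top values. If $\zbar \geq n-1$, then Proposition~\ref{obs} gives $\md(G) = 0 \leq n-4$, since $n \geq 5$. So we may assume $\zbar \leq n-2$. Since $Z(G) \geq 1$ always holds, we get $\md(G) \leq n-3$, and it remains to exclude $\md(G) = n-3$.

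Equality $\md(G)=n-3$ would force both $\zbar = n-2$ and $Z(G) = 1$. The key fact we will use is that $Z(G)=1$ holds exactly when $G$ is a path $P_n$. Indeed, a single vertex can only start one forcing chain, so it must be an endpoint whose chain covers all of $V(G)$. Moreover, each force $v_i \to v_{i+1}$ requires that $v_{i+1}$ be the only uncolored neighbor of $v_i$; hence $v_i$ has no neighbors further along the chain, and the graph is an induced path.

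So the remaining work is to show that $\overline{Z}(P_n) \leq n-3$ for $n \geq 5$. In fact we want $\overline{Z}(P_n) < n-2$, which is what contradicts $\zbar = n-2$. To do this, let $S$ be a minimal Z-set of $P_n$ and suppose $|S| \geq n-2$, so that at most two vertices are uncolored.
\begin{itemize}
\item If some endpoint lies in $S$, then, because $n \geq 5$ and $|S|\ge n-2$, that endpoint together with the colored vertices beside it already contains a zero forcing subset. Either the endpoint alone is a Z-set, or we drop a colored vertex whose color would be forced anyway. In either case $S$ is not minimal.
\item More generally, a Z-set of $P_n$ is minimal only when it is a single endpoint or consists of two adjacent vertices. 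Any two consecutive colored vertices $v_i, v_{i+1}$ with $i+1<n$ force the rest of the path in both directions. It follows that a minimal Z-set of $P_n$ has size at most $2$.
\end{itemize}
Since $n\ge5$ gives $2 \le n-3$, we obtain $\overline{Z}(P_n) \le 2 < n-2$, a contradiction. Hence $\md(G)\le n-4$.

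The main obstacle is stating the characterization of minimal Z-sets of paths cleanly. The cleanest route is to argue that any Z-set of $P_n$ of size at least $2$ must contain two adjacent vertices or an endpoint. Otherwise its uncolored vertices would separate the colored ones so that each colored vertex has two uncolored neighbors, and no force could ever occur. Either of these contains a Z-set of size at most $2$, so minimality forces $|S|\le 2$.
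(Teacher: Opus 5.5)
Your proof is correct and follows the paper's own argument: Proposition~\ref{obs} disposes of $\zbar \ge n-1$, the case $Z(G)=1$ reduces to $G\cong P_n$ with $\overline{Z}(P_n)\le 2$, and otherwise $\zbar\le n-2$ together with $Z(G)\ge 2$ gives the bound. You also supply details the paper takes for granted (why $Z(G)=1$ forces a path, and why minimal Z-sets of $P_n$ have at most two vertices), though your first bullet can be simplified: a single endpoint is always a Z-set of $P_n$, so any Z-set of size at least $2$ containing an endpoint is immediately non-minimal.
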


\begin{proof}
If $\zbar \geq n-1$, then $\md(G) = 0 < n-4$ by Proposition \ref{obs}.  By definition, $Z(G) \geq 1$, and if $Z(G) = 1$, then $G$ is a path and $\md(G) \leq 2-1 = 1 \leq n-4$. Otherwise, $\zbar \leq n-2$ and $Z(G) \geq 2$, yielding $\md(G) \leq (n-2) - 2 = n-4$. Thus $\md(G) \leq n-4$ for all graphs of order $n\geq 5$.
\end{proof}

The proof of Proposition \ref{initial bound} implies that for a graph of high order to achieve a large minimal set discrepancy it must satisfy $Z(G)\geq 2$ and $\zbar \leq n-2$. 
Thus, there are only three possible combinations of values of $\zbar$ and $Z(G)$ that can produce a large order graph satisfying ${n-5 \leq \md(G) \leq n-4}$. They are as follows:
\begin{center}
\vspace{-2pt}
$Z(G) = 2$ and $\zbar = n-2$,

\vspace{3pt}

$Z(G) = 2$ and $\zbar = n-3$,

\vspace{3pt}

$Z(G) = 3$ and $\zbar = n-2$.
\end{center}

We will disprove each of these cases to show that ${n-5 \leq \md(G) \leq n-4}$ is unattainable for arbitrarily large values of $n$. We first present various techniques that will be repeatedly employed. The first two propositions are used to show that a chosen Z-set $S$ with certain properties is not minimal in $G$ by finding a proper subset of $S$ that is also a Z-set.

\begin{proposition}\label{reforce}
    Let $S$ be a zero forcing set of a connected graph $G$ of order $n \geq 2$. If there exists some vertex $v\in S$ that is adjacent to no vertices in $V(G)\setminus S$, then $S$ is not minimal.
\end{proposition}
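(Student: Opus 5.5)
We prove the statement by exhibiting a proper subset of $S$ that is still a zero forcing set. The natural candidate is $S\setminus\{u\}$ for a suitably chosen neighbor $u$ of $v$.

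First we find that neighbor. Since $G$ is connected and $n\ge 2$, the vertex $v$ has at least one neighbor. By hypothesis, every neighbor of $v$ lies in $S$, so $N(v)\subseteq S$. Choose any $u\in N(v)$ and set $S'=S\setminus\{u\}$. This is a proper subset of $S$, and it contains $v$ because $u\neq v$.

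Next we check that $S'$ forces $u$ at the first step. Under $S'$, the neighbors of $v$ are $u$ together with vertices of $N(v)\setminus\{u\}\subseteq S'$. So $v$ is blue and has exactly one uncolored neighbor, namely $u$. By the color change rule, $v\rightarrow u$ is a legal first force. After it, the blue set is exactly $S$. Hence $\cl(S')\supseteq\cl(S)=V(G)$, using that closure is monotone and that $\cl$ of any blue set reachable from $S'$ lies inside $\cl(S')$. Therefore $S'$ is a zero forcing set and $S$ is not minimal.

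No step here is a genuine obstacle. The one point needing care is the justification that after $v\rightarrow u$ the process from $S'$ continues exactly as it would from $S$. This follows from the standard fact that the closure is independent of the order of forces. Alternatively, the forces available from $S$ remain available in sequence once $S$ is entirely blue, since having more blue vertices never reduces the number of uncolored neighbors below what is needed. Connectivity is used only to guarantee that $N(v)\neq\emptyset$, which the argument needs in order to choose $u$.
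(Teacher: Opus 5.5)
Your proof is correct and follows essentially the same approach as the paper: remove a neighbor $u$ of $v$ from $S$, note that $v$ then has $u$ as its unique uncolored neighbor and forces it, so $\cl(S\setminus\{u\}) \supseteq S$ and $S\setminus\{u\}$ is a zero forcing set. You are slightly more explicit than the paper in using connectivity and $n \geq 2$ to guarantee that such a neighbor exists, and in justifying that the closure of $S\setminus\{u\}$ contains the closure of $S$.
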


\begin{proof}
    Assume $S$ is a Z-set. Let $S'= S \setminus y$ for some $y\in S\cap N(v)$. Then $v$ can force $y$, implying that $\cl(S') = \cl(S)$. This implies that $S'$ is a Z-set, and thus $S$ is not minimal.
\end{proof}

\begin{proposition}\label{nbhd}
    Let $S$ be a zero forcing set of a connected graph $G$, and let $\mathcal{Z}$ be a minimum zero forcing set. If there exists $S' \subset S$ satisfying $\mathcal{Z} \subseteq \cl(S')$, then $S$ is not minimal.
\end{proposition}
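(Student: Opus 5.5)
The idea is to show directly that $S'$ itself is a zero forcing set; since $S' \subset S$ is proper, $S$ then fails to be minimal. Note that $S' \neq S$ is understood, since otherwise the statement would contradict the existence of minimal sets. So the task reduces to proving $\cl(S') = V(G)$, and the key tool is monotonicity of the closure operator.

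\textbf{Step 1: monotonicity.} If $A \subseteq B$, then $\cl(A) \subseteq \cl(B)$. The standard argument takes a chronological list of forces $u_1 \rightarrow w_1, u_2 \rightarrow w_2, \dots$ realizing $\cl(A)$ and inducts along it, starting from the set $B$:
\begin{itemize}
\item At the time $u_i \rightarrow w_i$ occurs from $A$, $u_i$ is blue and every neighbor of $u_i$ other than $w_i$ is blue.
\item By induction, all of these vertices are already in the current blue set grown from $B$.
\item Hence either $w_i$ is already blue, or $u_i$ has exactly one uncolored neighbor, namely $w_i$, and can force it.
\end{itemize}
Thus every vertex of $\cl(A)$ is eventually colored from $B$. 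A corollary is idempotence: $\cl(\cl(A)) = \cl(A)$. Indeed, $\cl(A)$ is blue and stable, since no vertex of it has exactly one non-blue neighbor, as that is exactly the stopping condition.

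\textbf{Step 2: apply this to $\mathcal{Z}$.} Since $\mathcal{Z} \subseteq \cl(S')$, monotonicity gives $\cl(\mathcal{Z}) \subseteq \cl(\cl(S')) = \cl(S')$. But $\mathcal{Z}$ is a zero forcing set, so $\cl(\mathcal{Z}) = V(G)$. Therefore $\cl(S') = V(G)$, meaning $S'$ is a zero forcing set properly contained in $S$, and $S$ is not minimal.

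\textbf{Expected obstacle.} The only substantive step is the monotonicity and idempotence of closure, because the forcing rule is not obviously monotone: adding blue vertices can destroy the "exactly one uncolored neighbor" condition. The inductive argument handles this by noting that in that case the target vertex is already blue. Minimality of $\mathcal{Z}$ and connectivity of $G$ are not actually needed; any zero forcing set $\mathcal{Z}$ would do.
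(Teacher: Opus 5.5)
Your proof is correct and follows the same route as the paper, which simply writes $\cl(S') = \cl(\mathcal{Z}) = V(G)$ and concludes from the properness of $S' \subset S$. Your monotonicity and idempotence argument for $\cl$ supplies the justification that this one-line step leaves implicit, and you are right that neither the minimality of $\mathcal{Z}$ nor the connectivity of $G$ is used.
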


\begin{proof}
    If $\mathcal{Z} \subseteq \cl(S')$, then $\cl(S') = \cl(\mathcal{Z}) = V(G)$. Since we assume $S' \subset S$, $S$ is not minimal.
\end{proof}

While not directly contradicting the minimality of a Z-set, the following proposition is helpful in understanding the composition and forcing process of a Z-set.

\begin{proposition}\label{fort}
    If $S\subset V(G)$ is a zero forcing set of a graph $G$, then there exists some $v\in S$ that is adjacent to exactly one vertex in $V(G) \setminus S$.
\end{proposition}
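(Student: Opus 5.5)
The plan is to look at the very first force performed when the process starts from $S$. Since $S \subset V(G)$ is a proper subset, the set $V(G)\setminus S$ is nonempty. Because $S$ is a zero forcing set, $\cl(S) = V(G)$, so at least one vertex outside $S$ eventually turns blue. Hence the process performs at least one force, and there is a first force $v \rightarrow w$ in some chronological list of forces.

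The key step is to examine the coloring at the moment just before that first force. No force has happened yet, so the set of blue vertices is exactly $S$. The color change rule then says two things.
\begin{itemize}
\item The forcing vertex $v$ is blue at that moment, so $v \in S$.
\item The vertex $v$ has exactly one uncolored neighbor, namely $w$. The uncolored vertices at that moment are precisely $V(G)\setminus S$, so $v$ is adjacent to exactly one vertex of $V(G)\setminus S$.
\end{itemize}
This is exactly the vertex the statement asks for.

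An alternative route is through forts. The set $V(G)\setminus S$ is nonempty, and if no vertex of $S$ had exactly one neighbor in it, then no vertex outside $V(G)\setminus S$ would have exactly one neighbor in it. That would make $V(G)\setminus S$ a fort disjoint from $S$. A fort disjoint from $S$ can never be forced, which contradicts $\cl(S)=V(G)$. I would present the direct first-force argument, since it uses only the definitions, and perhaps mention the fort view in a remark.

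The only point needing care is justifying that some force actually occurs. This is where the hypothesis that $S$ is a proper subset is used: if $S = V(G)$, the propagation time is 0 and no such $v$ need exist. Beyond that, the argument is immediate and I expect no real obstacle.
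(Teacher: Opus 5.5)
Your proof is correct. Your ``alternative route'' is exactly the paper's proof: the paper argues that if no such $v$ existed, $V(G)\setminus S$ would be a fort disjoint from $S$, which is impossible for a zero forcing set by \cite[Proposition 2.2]{alameda}.

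Your primary first-force argument is a more elementary route. It uses only the color change rule, and it in effect inlines the proof of the cited fort lemma, specialized to the single fort $V(G)\setminus S$. That proof says the first vertex of an unforced fort to turn blue would need a forcer outside the fort with exactly one neighbor inside it.

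The paper's version buys brevity and consistency with the fort language it uses elsewhere, e.g.\ in Proposition~\ref{n-6 family}. Yours buys three things:
\begin{itemize}
\item it is self-contained;
\item it gives an explicit witness, since any vertex performing a force in the first time step works;
\item it shows clearly where the properness of $S$ is needed, which the paper uses only implicitly through the requirement that a fort be nonempty.
\end{itemize}
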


\begin{proof}
    If $v$ does not exist, then $V(G) \setminus S$ is a fort. $S$ does not intersect this fort, and is therefore not a Z-set by \cite[Proposition 2.2]{alameda}, a contradiction.
\end{proof}

Since the structure of a graph relies heavily on the zero forcing number, we will first approach the cases where $Z(G) = 2$, before disproving the existence of a high order graph satisfying the third and final case.

\subsection{Bounding $\zbar$ when $Z(G)=2$}

For the case when $Z(G)=2$, we know the overall structure of the graph $G$ is a graph on two parallel paths by the following proposition in \cite{row}. We then consider all cases for the position of uncolored vertices in $G$ for a Z-set of size $n-3$, up to symmetry and isomorphism.
\begin{proposition}\cite[Theorem 2.3]{row}\label{Z2} Let $G=(V,E)$ be a graph. Then $Z(G)=2$ if and only if $G$ is a graph on two parallel paths. Furthermore, this means that $G$ will be outerplanar. \end{proposition}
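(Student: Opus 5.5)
This result is quoted from \cite[Theorem 2.3]{row}, so in the paper it is simply invoked. If I were to prove it myself, I would split it into its two directions and then deduce outerplanarity from the definition of a graph on two parallel paths.

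\emph{Sufficiency.} Suppose $G$ is a graph on two parallel paths $P=p_1\dots p_a$ and $Q=q_1\dots q_b$. I would take $S=\{p_1,q_1\}$, the endpoints on one side of a non-crossing drawing, and argue by induction that at every stage the blue vertices form initial segments $p_1,\dots,p_i$ and $q_1,\dots,q_j$ of the two paths.

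\begin{itemize}
\item The planarity (non-crossing) condition lets me order the cross edges monotonically. Consequently, the blue vertex $p_i$ or $q_j$ that has the smallest forward cross-neighbour has only one uncolored neighbour, which is its successor on its own path.
\item Each step therefore extends one of the two segments, and the process only stops once both paths are fully blue. This gives $Z(G)\le 2$.
\item For the matching lower bound, $Z(G)=1$ holds exactly when $G$ is a path, and the definition excludes a simple path. Hence $Z(G)=2$.
\end{itemize}

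\emph{Necessity.} Let $Z(G)=2$ and fix a minimum Z-set with a chronological list of forces. The two maximal forcing chains partition $V(G)$, and each chain induces a path. The chains are also independent, so they are candidates for the two parallel paths; since $Z(G)\neq 1$, $G$ is not itself a path. The real work is showing that the cross edges can be drawn without crossings.

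\begin{itemize}
\item Suppose two cross edges $p_iq_l$ and $p_kq_j$ cross, with $i<k$ and $j<l$.
\item Consider the moment $p_i$ forces $p_{i+1}$. At that moment $q_l$ must already be blue, and the same holds with the roles of the paths swapped for the force $q_j\to q_{j+1}$.
\item Comparing the times at which these two forces occur should then give a contradiction.
\end{itemize}

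\emph{Main obstacle.} This crossing argument is the step I expect to be hardest. Before the timing comparison works, one may first have to reverse the chains, or choose them as reversals of a forcing process, to get a consistent orientation.

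Finally, outerplanarity follows directly: draw the two paths on two parallel lines with the cross edges between them. Every vertex then lies on the outer face.
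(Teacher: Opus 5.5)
The paper gives no proof of this statement. It imports the equivalence from \cite[Theorem 2.3]{row} and takes outerplanarity from the remark after the definition of a graph on two parallel paths, which is exactly your last paragraph. Your self-contained argument is a sound route. It also yields the picture the paper later uses in Proposition~\ref{Side1}: the parallel paths are the forcing chains of a minimum Z-set, and the two endpoints on one side form a minimum Z-set. Two steps need tightening, but neither is a real obstacle.

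\textbf{Sufficiency.} The selection rule ``the vertex with the smallest forward cross-neighbour'' is wrong as stated. A frontier vertex $p_i$ with $i<a$ that has an uncolored cross-neighbour also has the uncolored successor $p_{i+1}$, so it cannot force. What non-crossing actually gives is that \emph{at least one} of $p_i,q_j$ has no uncolored cross-neighbour, since edges $p_iq_l$ and $p_kq_j$ with $l>j$ and $k>i$ would cross. Suppose this vertex is $p_i$ (the case $q_j$ is symmetric).
\begin{itemize}
\item If $i<a$, then $p_{i+1}$ is its unique uncolored neighbour, because $P$ is induced.
\item If $i=a$, all of $P$ is blue. Then $q_j$ has no uncolored cross-neighbour and, since $j<b$, forces $q_{j+1}$.
\end{itemize}

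\textbf{Necessity.} The timing comparison you left open closes at once with both chains oriented in forcing order, so no reversal is needed. Let $t(x)$ be the position in the chronological list at which $x$ turns blue. Since $i<k\le a$ and $j<l\le b$, both $p_i$ and $q_j$ perform forces. When $p_i\to p_{i+1}$, its neighbour $q_l$ is already blue. When $q_j\to q_{j+1}$, its neighbour $p_k$ is already blue. The vertices of each chain turn blue in order. Hence
\[
t(p_{i+1})\le t(p_k)<t(q_{j+1})\le t(q_l)<t(p_{i+1}),
\]
a contradiction. The same timing idea justifies your unproved claim that each chain is an induced path. If $v_r$ were adjacent to $v_s$ with $s\ge r+2$ in the same chain, then $v_s$ would have to be blue when $v_r$ forces $v_{r+1}$, although it turns blue only after $v_{r+1}$.
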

The definition of a graph on parallel paths implies that the graph $G$ of order $n$ is of the form $P_y \cup P_z$, where $n=y+z$, with possible non-crossing edges connecting $P_y$ and $P_z$. For notation, we let $V(P_y)=\{v_1,v_2,...,v_y\}$ and $ V(P_z)=\{w_1,w_2,...,w_z\}$, noting that $\{v_j, w_k\}$ are pairwise distinct for any $j, k$. An example graph is seen in Figure \ref{parallel paths graph 4}. 
\vspace{15pt}
\begin{figure}[H]
\begin{center}
\begin{tikzpicture}[scale=.65, transform shape]
    \node[main node, fill=white] (1) {$w_1$};
    \node[main node, fill=white] (2) [right = 1cm of 1] {$w_2$};
    \node[main node, fill=white] (3) [right = 1cm of 2] {$w_3$};
    \node[main node, fill=white] (4) [right = 1cm of 3] {$w_4$};
    \node[main node, fill=white] (5) [right = 1cm of 4] {$w_5$};   
    \node[main node, fill=white] (6) [right = 1cm of 5] {$w_z$};
    \node[main node, fill=white] (7) [above = 1cm of 1] {$v_1$};
    \node[main node, fill=white] (8) [above = 1cm of 2] {$v_2$};
    \node[main node, fill=white] (9) [above = 1cm of 3] {$v_3$};
    \node[main node, fill=white] (10) [above = 1cm of 4] {$v_4$};
    \node[main node, fill=white] (11) [above = 1cm of 5] {$v_y$};
    \path[draw,thick]
    (1) edge node {} (2)
    (2) edge node {} (3)
    (3) edge node {} (4)
    (4) edge node {} (5)
    (7) edge node {} (8)
    (8) edge node {} (9)
    (9) edge node {} (10)
    (7) edge[dashed] node {} (2)
    (8) edge[dashed] node {} (2)
    (10) edge[dashed] node {} (2)
    (11) edge[dashed] node {} (4)
    (11) edge[dashed] node {} (6);
    \path (5) -- node[xshift=0cm] {$\dots$} (6);
    \path (10) -- node[xshift=0cm] {$\dots$} (11);
    \end{tikzpicture}
\caption{An example outerplanar graph of the form $P_y \cup P_z$.}
\label{parallel paths graph 4}
\end{center}
\end{figure}

We are primarily interested in the minimal zero forcing set discrepancy of graphs of arbitrarily high order. Thus, in many of our proofs we set a lower bound on the order to reduce the argument's complexity and number of cases, such as $n\geq 10$ in the following proposition.

\begin{proposition}\label{Side1}
If $G$ is a graph of order $n\geq 10$ that satisfies $Z(G)=2$, then $\zbar \neq n-3$.
\end{proposition}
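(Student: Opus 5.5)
We argue by contradiction. Suppose $S$ is a minimal Z-set with $|S| = n-3$, and let $U = V(G)\setminus S = \{a,b,c\}$ be its three uncolored vertices. By Proposition~\ref{Z2}, $G$ is a graph on two parallel paths $P_y$ and $P_z$ with non-crossing rungs. We may assume $G$ is connected; if it is not, it is a disjoint union of two paths, and the argument below applies to the component containing the vertices of $U$. Minimality of $S$ together with Proposition~\ref{reforce} forces every vertex of $S$ to have a neighbor in $U$. So $S \subseteq N(a)\cup N(b)\cup N(c)$, and hence
\[ n-3 \le |N(a)|+|N(b)|+|N(c)|. \]

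The key step is to bound the degrees in a graph on two parallel paths. Each vertex has at most two neighbors on its own path. Its neighbors on the other path form a contiguous block, because rungs do not cross and the graph is outerplanar. We would next use the fact that $G$ has only two forcing chains. More strongly, we use that $G$ is outerplanar and contains no $K_{2,3}$ minor, and that the neighborhoods of three vertices in an outerplanar graph overlap in a controlled way. From this we aim to show that $N(a)\cup N(b)\cup N(c)$ can cover all but three vertices only if the graph is small. The intended bound is
\[ |N(a)\cup N(b)\cup N(c) \setminus U| \le n-4 \quad \text{once } n \ge 10. \]
If this bound fails, the alternative is a structural contradiction: some vertex of $S$ has no neighbor in $U$, so Proposition~\ref{reforce} applies.

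When the neighborhood count alone does not settle a case, we will proceed by cases on the positions of $a,b,c$, up to symmetry and isomorphism. The cases are: all three on one path, or two on one path and one on the other. In each case we also record whether the chosen vertices are path endpoints and whether they are consecutive on their path. In each case we apply Proposition~\ref{fort}, which says some $s\in S$ has exactly one neighbor in $U$. We then try to remove a vertex of $S$ and recolor it by a force. Concretely, for a path vertex $v_i\in S$ whose path-neighbors $v_{i-1},v_{i+1}$ both lie in $S$, the minimum Z-set $\{v_1,w_1\}$ (or an appropriate two-endpoint set) can be shown to lie in $\cl(S')$ for some $S'\subset S$. Proposition~\ref{nbhd} then shows $S$ is not minimal. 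Since $n\ge 10$ and only three vertices are uncolored, some path contains a run of at least three consecutive vertices of $S$ that lie away from $U$. Such a run gives the slack needed to remove a vertex and recover it by a force along the path.

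We expect the main obstacle to be the case analysis itself. Specifically, we must show that in every configuration there is some $S'\subset S$ whose closure still contains a minimum Z-set. Rungs between the paths can create vertices with two uncolored neighbors, and these can block the recovering force. The first thing to try is to pick the removed vertex at a path end far from $U$, for instance $v_1$ when $a,b,c$ are near the other end, and show that its path-neighbor forces it back once the forcing from the far side completes. The most delicate configurations are those where $a,b,c$ are spread across both paths, so that no path end is far from $U$. For these we would use the non-crossing property to find a vertex whose only uncolored neighbor is one of $a,b,c$, and propagate from there.
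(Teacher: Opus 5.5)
You have a genuine gap: the proposal contains no proof, and its one concrete structural claim is false. The counting step cannot work. In a graph on two parallel paths, a single vertex can be adjacent to every vertex of the other path. For example, take $P_y=v_1v_2v_3$ with $v_2$ joined to all of $P_z$; this is non-crossing and outerplanar. So $N(a)\cup N(b)\cup N(c)$ can be all of $V(G)\setminus U$ for every $n$. Outerplanarity and the absence of a $K_{2,3}$ minor limit common neighbors, not the size of the union. (Also, neighbors on the other path need not be contiguous: $v_i$ may be adjacent to $w_1$ and $w_3$ but not $w_2$.)

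Your fallback sentence is circular. Failure of the bound means $S\subseteq N(U)$, which is exactly when Proposition~\ref{reforce} gives nothing. The run-of-three heuristic is only asserted, and it fails in general: vertices of the run can have rung-neighbors in $U$, possibly two each, which blocks the recovering force along the path. The forces that actually work in the paper are specific ones, such as ``delete $w_1$, then $w_3\to v_j$'', chosen case by case using outerplanarity.

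The missing idea that makes the case analysis manageable is this. Both $\{v_1,w_1\}$ and $\{v_y,w_z\}$ are minimum Z-sets; the second follows by reversing the forcing chains. Since $|S|=n-3>2$, if $U$ misses either end pair, then $S$ properly contains a Z-set and is not minimal. Hence $U$ must meet both end pairs. This immediately disposes of every configuration with two interior uncolored vertices, or with two uncolored vertices at the same end plus an interior one.

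Up to symmetry, only four configurations remain: $\{v_1,w_z,v_j\}$, $\{v_1,v_j,v_y\}$, $\{v_1,v_y,w_k\}$ and $\{v_1,v_y,w_z\}$. The paper treats each one, splitting on $z\ge 5$ versus $z<5$, where $n\ge 10$ forces $y\ge 6$. In each it uses outerplanarity to find either a vertex of $S$ with no neighbor in $U$, or an explicit $S'\subset S$ whose closure contains one end pair (Proposition~\ref{nbhd}). Your classification (``all three on one path / two and one'') never uses the end-pair observation. It would leave you with many configurations to settle by forcing arguments you have not supplied.

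Two smaller points. First, if one path is a single vertex $v_1=v_y$, that vertex alone meets both end pairs; the paper's case split overlooks this. There your run-of-three idea does work. The middle vertex of a run of three $S$-vertices on $P_z$ can only be adjacent to $v_1$ in $U$, so it forces $v_1$, and then the path propagates. Second, in the disconnected case $U$ may meet both components, so ``the component containing $U$'' is not well defined. It is simpler to note that $\zbar\le 4<n-3$ there.
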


\begin{proof}
Assume $G$ is a graph of order $n\geq 10$ such that $Z(G) = 2$. Because $Z(G) = 2$, we know that $G$ is a graph on parallel paths and outerplanar by \cite[Theorem 2.3]{row}. If $G$ is disconnected, then $G$ is two disjoint paths and $\zbar \leq 4 < n-3$. Thus, we will assume $G$ is connected and of the form $P_y \cup P_z$ with labeling as in Figure \ref{parallel paths graph 4}.

The parallel paths can be thought of as two maximal forcing chains of $G$ resulting from a minimum Z-set $\{v_1, w_1\}$. Observe also that $\{v_y, w_z\}$ is a minimum Z-set \cite[Theorem 2.6]{barioli}. Also, we define $\{v_1,w_1, v_y, w_z\}$ to be exterior vertices, and $V(G) \setminus \{v_1,w_1, v_y, w_z\}$ to be interior vertices.

For the sake of contradiction, let $S$ be a minimal Z-set of size $n-3$ and thus all vertices in $S$ are colored blue and the vertices in $V(G) \setminus S$ are uncolored. We now consider all possible characterizations of $V(G) \setminus S$, up to isomorphism and symmetry of the graph, each contradicting $S$ being a minimal Z-set.

\begin{description}
    \item[Case 1:]  Suppose at least two of the vertices in  $V(G) \setminus S$ are interior vertices.
    
    Then, either $\{v_1, w_1\} \subset S$ or $\{v_y, w_z\} \subset S$, both of which are minimum Z-sets.

    \item[Case 2:]  Suppose two of the vertices in  $V(G) \setminus S$ are on the same end, and the third is an interior vertex. Without loss of generality, let $\{v_1, w_1\} \subset V(G) \setminus S$.

    Then, the minimum Z-set $\{v_y, w_z\} \subset S$.

    \item[Case 3:]  Suppose two of the vertices in $V(G)\setminus S$ are on opposite ends and on different paths, and the third is an interior vertex. Without loss of generality, say $V(G) \setminus S = \{v_1, w_z, v_j\}$, where $2\leq j \leq y-1$.

    \begin{description}
        \item[Case a:]  Let $z \geq 5$.

        Consider the set $W=\{w_1,...,w_{z-2}\}$. If there exists $w_i \in W$ that is not adjacent to either $\{v_1,v_j\}$, then apply Proposition \ref{reforce} to $w_i$. Otherwise, suppose there exists $w_i \in W$ that is adjacent to $v_1$ and not $v_j$. Note that $\{w_1,..., w_{i-1}\}$ are not adjacent to $v_j$ by outerplanarity, thus Proposition \ref{nbhd} is satisfied by choosing $S' = N_c[w_1]$ and $\mathcal{Z} = \{v_1, w_1\}$. If neither of these exist, then each vertex $w_i \in W$ is adjacent to $v_j$. Proposition \ref{nbhd} is satisfied by choosing $S' = S \setminus \{w_1\}$ and $\mathcal{Z} = \{v_y, w_z\}$, because $w_3 \rightarrow v_j$ since $w_3 \notin N(v_1) \cup N(w_z)$. Then $w_{z-1} \rightarrow w_z$ because $w_{z-1} \notin N(v_1)$, and thus $\mathcal{Z} \subset \cl(S')$.
        
        \item[Case b:]  Let $z<5$.

        Then $y \geq 6$ because $y+z \geq 10$. Let $X=\{v_i\; | \; v_i \notin N[v_1] \cup N[v_j]\}$, and note that $X \neq \emptyset$. If there exists some $v_i \in X$ such that $v_i \notin N(w_z)$, then apply Proposition \ref{reforce} to $v_i$. Otherwise, each $v_i \in X$ is adjacent to $w_z$. Note that since at least four vertices of $P_y$ are colored blue because $y \geq 6$, there will always be some $v_k\notin \{v_1, v_j\}$ such that $v_k \notin N[v_i]$. Let $S' = S \setminus \{v_k\}$. Then $v_i \rightarrow w_z$, and $P_z$ is now fully colored blue. Next, because $\{v_{i-1},v_i\}$ are colored blue and adjacent on $P_y$, then $P_y$ will be forced. Thus $S$ is not minimal.

    \end{description}

    \item[Case 4:]  Suppose two of the vertices of $V(G)\setminus S$ are on opposite ends of the same path subgraph, and the third is an interior vertex on that path. Without loss of generality, say $V(G_n) \setminus S = \{v_1, v_j, v_y\}$, where $2\leq j \leq y-1$.
    
    \begin{description}
        \item[Case a:]  Let $z \geq 5$.
    
        Define the set $W=V(P_z)$. If there exists some $w_i\in W$ that is not adjacent to $v_1,v_j, \text{ or } v_y$, then apply Proposition \ref{reforce} to $w_i$. If there exists some $w_i\in W$ that is only adjacent to one uncolored vertex, and that is $v_1 \text{ or } v_y$, without loss of generality say $v_1 \in N[w_i]$, then Proposition \ref{nbhd} is satisfied by choosing $S' = N_c[w_i]\cup w_1$ and $\mathcal{Z} = \{v_1, w_1\}$. 
        
        If there exists some $w_i$ adjacent to both $v_1$ and $v_y$, but not $v_j$, there would exist $w_\ell \neq w_i$ that would be adjacent to either one or no vertex from the set $\{v_1, v_y\}$ by outerplanarity, and these have already been considered in the previous paragraph. This leaves the case when each $w_i \in W$ is adjacent to $v_j$. So, let $S' = S \setminus \{w_1\}$. Then $w_3 \rightarrow v_j \text{ and } w_2 \rightarrow w_1$ because the edge $v_jw_1$ exists. Thus $S\cup \{v_j\} \subseteq \cl(S')$. Because $S$ is a Z-set, $\cl(S\cup \{v_j\}) = V(G)$ and thus $\cl(S') = V(G)$ and $S$ is not minimal.

        \item[Case b:]  Let $z < 5$.

        Then $y \geq 6$. Note that $\{v_2,...,v_{y-1}\}$ contains at least 4 vertices, and all but one of these are colored blue. Thus, there always exists two adjacent vertices in $\{v_2,...,v_{y-1}\} \cap S$, call them $\{v_i, v_{i+1}\}$. Let $S' = V(P_z) \cup \{v_i, v_{i+1}\}$. Then $P_y$ will be forced, and so $S'\subset S$ is a Z-set and $S$ is not minimal.
    \end{description}

    \item[Case 5:]  Suppose two of the vertices of $V(G)\setminus S$ are on opposite ends of the same path subgraph, and the third is an interior vertex not on that path. Without loss of generality, say $V(G) \setminus S = \{v_1, v_y, w_k\}$, where $2\leq k \leq z-1$.
    
    \begin{description}
        \item[Case a:]  Let $z \geq 5$.

        Consider the set $W = V(P_z) \setminus N[w_k]$. If there exists $w_i \notin  W$ adjacent to neither $v_1$ nor $v_y$, then apply Proposition \ref{reforce} to $w_i$. Note that $|W| \geq 2$, and thus there exists some $w_i \in W$ that is not adjacent to both $\{v_1, v_y\}$ by outerplanarity. Without loss of generality, let $w_i$ be adjacent to $v_1$ and not $v_y$. Then Proposition \ref{nbhd} is satisfied by choosing $S' = N_c[w_i] \cup \{w_1\}$ and $\mathcal{Z} = \{v_1,w_1\}$.

       \item[Case b:]  Let $3 \leq z < 5$.

        Then $y \geq 6$. Let $X=\{v_3,...,v_{y-2}\}$. If there exists some $v_i \in X$ that is not adjacent to $w_k$, then apply Proposition \ref{reforce} to $v_i$. Otherwise, each $v_i \in X$ is adjacent to $w_k$. Then Proposition \ref{nbhd} is satisfied by choosing $S' = S \setminus \{w_z\}$ and $\mathcal{Z} = \{v_1, w_1\}$. Because $G$ is outerplanar and the edge $v_3w_k$ exists, $v_3 \rightarrow w_k$, and then $v_2 \rightarrow v_1$, thus $\mathcal{Z} \subseteq \cl(S')$.
    \end{description}

    \item[Case 6]:  Suppose the vertices of $V(G)\setminus S$ are all endpoints. Without loss of generality, say $V(G) \setminus S = \{v_1, v_y, w_z\}$. 

    \begin{description}
        \item[Case a:]  Let $z \geq 5$.

        Define the set $W=\{w_1,...,w_{z-2}\}$. If there exists $w_i \in W$ adjacent to neither $v_1$ nor $v_y$, then apply Proposition \ref{reforce} to $w_i$. Otherwise, suppose there exists $w_i \in W$ that is adjacent to $v_1$ and not $v_y$. Note that $\{w_1,..., w_{i-1}\}$ are not adjacent to $v_y$ by outerplanarity, thus Proposition \ref{nbhd} is satisfied by choosing $S' = N_c[w_1]$ and $\mathcal{Z} = \{v_1, w_1\}$. If neither of these exists, then each $w_i \in W$ is adjacent to $v_y$. Then Proposition \ref{nbhd} is satisfied by choosing $S'=N_c[w_2]\cup N_c[w_{z-1}]$ and $\mathcal{Z} = \{v_y, w_z\}$. Since $w_2 \rightarrow v_y$, and then $w_{z-1} \rightarrow w_z$, we have $\mathcal{Z} \subseteq \cl(S')$. Furthermore, $S' \subset S$ since $|S'| \leq 5 < n-3 = |S|$.

        \item[Case b:]  Let $z<5$. 

        Then $y \geq 6$. Let $X=\{v_3,...,v_{y-2}\}$. If there exists $v_i \in X$ not adjacent to $w_z$, then apply Proposition \ref{reforce} to $v_i$. Otherwise, each $v_i \in X$ is adjacent to $w_z$. And then Proposition \ref{nbhd} is satisfied by choosing $S' = N_c[v_4]\cup N_c[v_{y-1}]$ and $\mathcal{Z} = \{v_y, w_z\}$. Because the edge $v_4w_z$ exists, $v_4 \rightarrow w_z$, and then $v_{y-1} \rightarrow v_y$, thus $\mathcal{Z} \subset \cl(S')$. Note that $v_2 \notin S'$, and so $S' \subset S$.
    \end{description}

\end{description}

In every case, we see that if $S$ is a Z-set such that $|S| = n-3$, then $S$ is not minimal. Thus $\zbar \neq n-3$.
\end{proof}

Furthermore, we can now use this result to show that $\zbar < n-3$.

\begin{theorem}\label{cor2}
     If $G$ is a graph of order $n \geq 10$ satisfying $Z(G) \leq 2$, then $\zbar < n-3$.
\end{theorem}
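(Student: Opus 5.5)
We need to show $\zbar < n-3$ when $Z(G)\le 2$ and $n\ge 10$. We split on the value of $Z(G)$ and, for $Z(G)=2$, on the value of $\zbar$.

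\textbf{The case $Z(G)=1$.} Then $G\cong P_n$. Every minimal Z-set of a path has size at most $2$: a set containing two adjacent vertices already forces the whole path, and so does a single endpoint. Hence any Z-set of size at least $3$ either contains an endpoint or contains two adjacent vertices. If it contains an adjacent pair, that pair is a proper Z-subset. If it contains an endpoint $v_1$ but no adjacent pair, then $\{v_1\}$ is a proper Z-subset. Therefore $\zbar\le 2<n-3$ for $n\ge 10$.

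\textbf{The case $Z(G)=2$.} If $G$ is disconnected, then $G$ is two disjoint paths. Each path contributes at most $2$ vertices to a minimal Z-set, so $\zbar\le 4<n-3$. We may therefore assume $G$ is connected. By Proposition~\ref{obs}, if $\zbar\ge n-1$ then $\md(G)=0$, i.e.\ $\zbar=Z(G)=2<n-3$. Proposition~\ref{Side1} rules out $\zbar=n-3$. So the only remaining value to exclude is $\zbar=n-2$.

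\textbf{Excluding $\zbar=n-2$.} Suppose $S$ is a minimal Z-set with $V(G)\setminus S=\{a,b\}$, and use the parallel-path labeling of Figure~\ref{parallel paths graph 4}. At least two of the four exterior vertices $v_1,w_1,v_y,w_z$ lie in $S$, so we argue by where $a$ and $b$ sit.
\begin{itemize}
\item If neither of $\{a,b\}$ meets some end pair $\{v_1,w_1\}$ or $\{v_y,w_z\}$, then $S$ contains that minimum Z-set, which is a proper subset since $|S|=n-2>2$.
\item Otherwise $\{a,b\}$ meets both end pairs, so $a$ and $b$ are exterior vertices at opposite ends, either $\{v_1,v_y\}$ or $\{v_1,w_z\}$ up to symmetry.
\end{itemize}
In the remaining situation, with $n\ge 10$, one of the two paths has at least $5$ vertices, all blue except possibly an endpoint. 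I would apply the same dichotomy as in Cases 3, 5 and 6 of Proposition~\ref{Side1}:
\begin{itemize}
\item If some blue vertex is adjacent to neither $a$ nor $b$, Proposition~\ref{reforce} finishes.
\item If some vertex is adjacent to exactly one of $a,b$, say $a$, then outerplanarity gives a small $S'\subset S$ whose closure contains a minimum Z-set, and Proposition~\ref{nbhd} finishes.
\item By outerplanarity, at most two vertices can be adjacent to both $a$ and $b$, so this last situation cannot persist along a path of length at least $5$.
\end{itemize}

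The main obstacle is the careful outerplanarity bookkeeping in the subcase $\{v_1,w_z\}$. There, vertices on $P_y$ may be adjacent to $w_z$ and vertices on $P_z$ to $v_1$. The fix is to choose the long path and apply Proposition~\ref{fort}-style reasoning: a blue vertex with exactly one uncolored neighbor forces it, giving a proper subset such as $S\setminus\{w_1\}$ that still colors a minimum Z-set.

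If this direct argument becomes messy, I would instead reduce to Proposition~\ref{Side1}. Take a minimal Z-set $S$ of size $n-2$; by Proposition~\ref{fort}, some $v\in S$ has exactly one uncolored neighbor. One could then try to build a minimal Z-set of size $n-3$ from $S$, which contradicts Proposition~\ref{Side1}. However, I think the direct case analysis above is more reliable than this reduction.
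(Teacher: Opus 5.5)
Your treatment of $Z(G)=1$, of the disconnected case, and the reduction to ruling out $\zbar=n-2$ (via Propositions~\ref{obs} and~\ref{Side1}) agree with the paper. So does discarding every $S$ that contains an end pair.

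\textbf{The gap.} The one step that carries the theorem beyond Proposition~\ref{Side1} is a Z-set $S$ whose complement $\{a,b\}$ consists of exterior vertices at opposite ends. There you give a plan rather than an argument. Its key claim, that a blue vertex $c$ adjacent to exactly one of $a,b$ yields ``by outerplanarity'' a small proper $S'$, is false as stated. Take $\{a,b\}=\{v_1,w_4\}$, $z=4$, and $w_2$ adjacent to every $v_j$. Then $c=w_2$ is adjacent to $a$ but not $b$, yet the natural candidate $N_c[w_2]\cup\{w_1\}$ is all of $S$. Here $v_3$ happens to fall under your first bullet, but that only shows the bullets work in a particular order and with a particular choice of $c$, which you never specify. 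Likewise, your suggested fix $S\setminus\{w_1\}$ for the $\{v_1,w_z\}$ subcase comes with no reason why $w_1$ would be re-forced.

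\textbf{How the paper avoids it.} The paper uses a short reduction.
\begin{enumerate}
\item Whichever of $a,b$ is forced last, say $v_y$, performs no force. Attach a pendant leaf $v_{y+1}$ to it.
\item The new graph $G'$ is still on two parallel paths and has order $n+1\ge 11$. $S$ is a Z-set of $G'$ of size $|V(G')|-3$.
\item Proposition~\ref{Side1} then gives a proper Z-subset $S''\subset S$ in $G'$.
\item The leaf can only be forced by $v_y$, and that is $v_y$'s only force. Deleting that force leaves a valid forcing sequence in $G$, so $S''$ is a Z-set of $G$.
\end{enumerate}
Your fallback of reducing to Proposition~\ref{Side1} was the right instinct, but the reduction should enlarge the graph rather than shrink the set.

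\textbf{Completing your direct route.} First assume no blue vertex misses both $a$ and $b$, then use the structure this forces.
\begin{enumerate}
\item For $\{a,b\}=\{v_1,w_z\}$ with $y,z\ge 3$, it gives $v_3\sim w_z$ and $w_1\sim v_1$. Non-crossing then yields $N(w_1)\cap V(P_y)\subseteq\{v_1,v_2,v_3\}$. So $S'=N_c[w_1]$ has at most four vertices (hence $S'\subsetneq S$, as $|S|\ge 8$), and $w_1$ forces $v_1$, giving $\{v_1,w_1\}\subseteq\cl(S')$.
\item For $\{a,b\}=\{v_1,v_y\}$: if $y\ge 5$, then $v_3$ misses both. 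Otherwise $z\ge 6$, and your $K_{2,3}$ count produces some $w_i$ adjacent to exactly one of $v_1,v_y$. Then $N_c[w_i]\cup\{w_1\}$ (or $\cup\{w_z\}$) contains at most four of the $z\ge 6$ vertices of $P_z\subseteq S$.
\item The short-path cases are analogous.
\end{enumerate}

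\textbf{A case both you and the paper miss.} When $y=1$, $v_1=v_y$ lies in both end pairs, so $\{a,b\}=\{v_1,w_k\}$ with $w_k$ interior escapes your dichotomy. It is settled by Proposition~\ref{reforce} or~\ref{nbhd} applied to $w_1$ (or $w_z$).
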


\begin{proof}

If $Z(G)=1$, then $G$ is a path. By definition, paths of order, $n \geq 10$ have $\zbar=2 < n-3$. Consider when $Z(G)=2$. If $G$ is disconnected, it is two disjoint paths and $\zbar \leq 4 < n-3$. Thus, we will assume $G$ is connected. Note that by Proposition \ref{initial bound}, when $Z(G) = 2$ and $n\geq 10$, we know that $\zbar \leq n-2$. We now will show that $\zbar \neq n-2$. Consider, for contradiction, a graph $G$ where $Z(G) = 2 \text{ and }\zbar = n-2$ using the same labeling as Figure \ref{parallel paths graph 4}. Let $S$ be a Z-set of size $n-2$. If $\{v_1,w_1\} \subset S$ or $\{v_y,w_z\} \subset S$, then $S$ is not minimal. This covers any case where $V(G) \setminus S$ contains an interior vertex, or when $V(G) \setminus S$ contains two exterior vertices on the same side.

Otherwise, both vertices in $V(G) \setminus S$ are exterior vertices on opposite sides which may or may not be on the same path. Then because $S$ is a Z-set, at least one vertex in $V(G) \setminus S$ is in $\Term(\mathcal{F})$, where $\mathcal{F}$ is the set of forces of $S$. Without loss of generality, say $v_y \in \Term(\mathcal{F})$. Construct a new graph $G'$ of order $m = n+1$ by adding a leaf $v_{y+1}$ adjacent to $v_y$. Note that $S$ is still a Z-set of $G'$, and the order of $S$ in relation to $G'$ is $m-3$. Also by adding a leaf, see that $Z(G')$ is still 2. Therefore we can apply  Proposition \ref{Side1} to $G'$ to show that $\overline{Z}(G') \neq m-3$, which means that $S$ is not minimal in $G'$. This further implies that $S$ is not minimal in $G$. Therefore $\zbar \neq n-2$. Finally, by Proposition \ref{Side1}, $\zbar \neq n-3$, and thus $\zbar < n-3$.
\end{proof}

Thus, we have disproved the existence of a graph $G$ satisfying both $Z(G) = 2$ and $\md(G) > n-6$ for order $n\geq 10$.

\subsection{Bounding $\zbar$ when $Z(G) = 3$}

To disprove the case where $Z(G) = 3$ and $\zbar = n-2$, we must consider the characterizations of a graph on three forcing chains. First, we give a simple observation regarding the planarity of this family.

\begin{observation}\label{observation1}
    If $Z(G)\leq 3$, then $G$ is planar.
\end{observation}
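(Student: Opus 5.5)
The plan is to combine the minor-monotonicity of the zero forcing number with the known excluded minors for planarity. Write $M(G)$ for the maximum nullity over real symmetric matrices with off-diagonal pattern given by $G$ (the parameter from \cite{AIM}). I will use three standard facts about it:
\begin{itemize}
\item $M(G) \leq Z(G)$ for every graph $G$ \cite{AIM};
\item $M$ is minor-monotone for connected graphs (Colin de Verdi\`ere's parameter $\mu$ satisfies $\mu(G)\le M(G)$ and is minor-monotone);
\item $\mu(K_5)=4$ and $\mu(K_{3,3})=4$, and Kuratowski/Wagner characterizes planar graphs as those with no $K_5$ or $K_{3,3}$ minor.
\end{itemize}

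Concretely, suppose $Z(G)\le 3$ but $G$ is not planar. Some connected component $H$ of $G$ is nonplanar. Since a Z-set of $G$ restricts to a Z-set of each component, $Z(H)\le Z(G)\le 3$. By Wagner's theorem, $H$ has a $K_5$ or $K_{3,3}$ minor. The Colin de Verdi\`ere invariant $\mu$ is minor-monotone and satisfies $\mu(H)\le M(H)\le Z(H)$, so
\[
\mu(H)\ge \min\{\mu(K_5),\mu(K_{3,3})\}=4 .
\]
This gives $Z(H)\ge 4$, a contradiction.

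A more elementary route would avoid $\mu$. One could use $Z(H)\ge M(H)\ge \mu(H)$, and instead of $\mu$ cite the known fact that $Z$ is not minor-monotone, only $M$-type parameters are. So the cleanest justification goes through $\mu$ and the fact that $\mu(G)\le 3$ exactly characterizes planar graphs (Colin de Verdi\`ere). With that characterization the observation is immediate: $\mu(G)\le M(G)\le Z(G)\le 3$ implies $G$ is planar. Here I use that $\mu$ of a disconnected graph is the maximum over its components, and that nonnegative-entry nullity arguments are not needed.

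The only step needing care is the inequality $\mu(G)\le M(G)$. It holds because $\mu$ is defined as the maximum nullity over a subclass of the matrices counted by $M$ (generalized Laplacians with the Strong Arnold Property), so the inequality is immediate from the definitions. The nontrivial input is Colin de Verdi\`ere's theorem, which I would cite rather than reprove. I expect no real obstacle beyond correctly quoting these results. If a self-contained forcing argument were desired instead, the hard part would be showing directly that three forcing chains cannot realize a $K_{3,3}$ or $K_5$ subdivision, and I would not attempt that here.
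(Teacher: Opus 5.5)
Your argument is correct and is essentially the paper's proof: the chain $\mu(G)\le M(G)\le Z(G)\le 3$ combined with Colin de Verdi\`ere's characterization of planar graphs as exactly those with $\mu\le 3$. Your Wagner-minor version simply unpacks one direction of that characterization.

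One caution: delete the bullet asserting that $M$ itself is minor-monotone on connected graphs, and the garbled ``more elementary route'' paragraph. That claim is false: $K_{1,4}$ is a spanning subgraph of the fan $K_1\vee P_4$, yet $M(K_{1,4})=3>2\ge M(K_1\vee P_4)$. Your proof never needs it, since it only uses the minor-monotonicity of $\mu$.
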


\begin{proof}
    It has been established in \cite[Observation 2.4, Corollary 2.24]{barrett} that every graph $G$ satisfies $\mu(G) \leq M(G) \leq Z(G)$, where $\mu(G)$ is the Colin de Verdière number, $M(G)$ is the maximum nullity, and $Z(G)$ is the zero forcing number. If $Z(G)\leq3$, then $\mu(G) \leq M(G) \leq 3$. By \cite[Theorem 1]{mccarty}, since $\mu(G) \leq 3$, then G is planar.
\end{proof}

Now, we extend the outerplanarity argument to induced subgraphs on forcing chains.

\begin{lemma}\label{characterization2}
    If $G$ is a graph satisfying $Z(G)=3$ and $\mathcal{Z}$ is a minimum zero forcing set of $G$, then the induced subgraph $G'$ on the vertices of two forcing chains of the set $\mathcal{Z}$ satisfies $Z(G')\leq 2$ and is outerplanar.
\end{lemma}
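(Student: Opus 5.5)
Fix a minimum Z-set $\mathcal{Z}=\{a,b,c\}$ of $G$ with a chronological list of forces $\mathcal{F}$. Since $|\mathcal{Z}|=3$, $\mathcal{F}$ has exactly three maximal forcing chains $C_a,C_b,C_c$, starting at $a,b,c$. They partition $V(G)$, and each is an induced path. The induced-path claim is standard: if $u_i\to u_{i+1}$ in a chain, then every later vertex $u_k$ with $k>i+1$ was still uncolored at that moment, so it cannot be adjacent to $u_i$. Take, say, $G'=G[C_a\cup C_b]$. The plan is to show $\{a,b\}$ is a zero forcing set of $G'$. This gives $Z(G')\le 2$. If $Z(G')=2$, then Proposition~\ref{Z2} shows $G'$ is a graph on two parallel paths and hence outerplanar. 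If $Z(G')=1$, then $G'$ is a path, which is trivially outerplanar.

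The key step is to replay the forces of $\mathcal{F}$ that lie inside $C_a\cup C_b$, in their original order, within $G'$ starting from $\{a,b\}$. Consider a force $u\to w$ with $u,w\in C_a\cup C_b$, performed at time $t$ in $G$. At that moment, $w$ is the only uncolored neighbor of $u$ in $G$. By induction on $t$, every vertex of $C_a\cup C_b$ colored before time $t$ in $G$ is already colored in our replay in $G'$. The neighbors of $u$ in $G'$ are exactly its $G$-neighbors inside $C_a\cup C_b$. So every such neighbor other than $w$ was colored in $G$ before time $t$, is therefore colored in the replay, and $u\to w$ is valid in $G'$. Vertices of $C_c$ are simply absent from $G'$, and removing them can only reduce the number of uncolored neighbors. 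Hence the closure of $\{a,b\}$ in $G'$ is all of $C_a\cup C_b$.

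The main obstacle is making the induction precise, that is, ensuring the replay order is consistent. Forces in $G$ happen in rounds, and several forces may occur simultaneously. To handle this I would fix a chronological list of individual forces and perform them one at a time in $G'$. Then the invariant is simply: before the $k$th force of $\mathcal{F}$ that lies in $C_a\cup C_b$, every vertex of $C_a\cup C_b$ colored before the $k$th force in $G$ is colored in $G'$. The final outerplanarity claim then follows directly from Proposition~\ref{Z2}. The same argument applies to any choice of two of the three chains.
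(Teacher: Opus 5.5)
Your proof is correct and follows the paper's route. You show that the two chain-initial vertices of $\mathcal{Z}$ in $G'$ form a zero forcing set of $G'$, so $Z(G')\le 2$, and then get outerplanarity from the path case or the two-parallel-paths characterization (Proposition~\ref{Z2}). The paper only asserts that $V(G')\cap\mathcal{Z}$ is a Z-set of $G'$, whereas you justify it by replaying the forces inside the two chains; this fills in a step the paper leaves implicit rather than changing the approach.
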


\begin{proof}
    Let $\mathcal{Z}$ be a minimum Z-set of $G$, partitioning $V(G)$ into three forcing chains. Each vertex in $\mathcal{Z}$ is the first vertex of a chain, and every vertex in $V(G)\setminus \mathcal{Z}$ is in exactly one forcing chain. The induced subgraph $G'$ on the set of vertices in any two of these two forcing chains must satisfy $Z(G') \leq 2$, since both $V(G')\cap \mathcal{Z}$ and $V(G')\cap \Term(\mathcal{Z})$ are Z-sets of $G'$. 

    If $Z(G')=1$, then $G'\cong P_{|G'|}$ and $G'$ is outerplanar. Otherwise if $Z(G')=2$, then $G'$ is a graph on two parallel paths and is by definition outerplanar \cite[Theorem 2.3]{row}.
\end{proof}

Using Lemma \ref{characterization2}, we now move towards disproving the existence of a graph $G$ of order $n\geq 17$ satisfying both $Z(G) = 3$ and $\zbar = n-2$. To do this, we first address in Lemma \ref{combined lemma} a specific characterization of a Z-set $S$ that is particularly difficult to show not minimal. Note that we picked $n\geq 17$ as it significantly reduces the number of cases to be considered, but we believe this order could be reduced further. We leave this as an open question in Section \ref{conclusion}.

\begin{lemma}\label{combined lemma}
    Given a connected graph $G$ of order $n \geq 17$ satisfying $Z(G) = 3$, let $\mathcal{Z'}$ be a minimum zero forcing set of $G$ and $\mathcal{F}$ be a set of forces of $\mathcal{Z'}$ that can be partitioned into three maximal forcing chains. If under a zero forcing set $S$ of size $n-2$:

    \begin{itemize}
        \item One of the uncolored vertices is the starting vertex of a forcing chain of $\mathcal{F}$,
        \item the other is in the terminus of $\mathcal{F}$, and
        \item these vertices are in unique forcing chains,
    \end{itemize}

    \noindent then $S$ is not a minimal Z-set.
\end{lemma}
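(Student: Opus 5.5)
Label the three maximal forcing chains of $\mathcal{F}$ as $A=(a_1,\dots,a_p)$, $B=(b_1,\dots,b_q)$ and $C=(c_1,\dots,c_r)$, with $p+q+r=n\ge 17$. Without loss of generality the uncolored vertices are $a_1$ (the start of $A$) and $b_q$ (the terminus of $B$). Every vertex of $C$ is then in $S$, and so are $a_2,\dots,a_p$ and $b_1,\dots,b_{q-1}$. The aim is to exhibit a proper subset $S'\subset S$ that is still a Z-set. The tools are Proposition~\ref{reforce}, Proposition~\ref{nbhd} with either $\mathcal{Z}=\mathcal{Z'}=\{a_1,b_1,c_1\}$ or $\mathcal{Z}=\Term(\mathcal{F})\cap\{a_p,b_q,c_r\}$ (the reversal of a minimum Z-set is again a minimum Z-set), and Lemma~\ref{characterization2}, which says the subgraphs induced on any two of the chains are outerplanar graphs on two parallel paths.

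\textbf{Step 1: reduce to heavy adjacency.} By Proposition~\ref{reforce}, if some vertex of $S$ has no neighbor in $\{a_1,b_q\}$, we are done. So assume every vertex of $S$ is adjacent to $a_1$ or to $b_q$. Along $A$, outerplanarity of $G[A\cup B]$ and planarity (Observation~\ref{observation1}) limit how many vertices can be adjacent to $a_1$: only $a_2$ is a path-neighbor of $a_1$ inside $A$, and the chain is an induced path. Hence $a_3,\dots,a_p$ must all be adjacent to $b_q$. Symmetrically, $b_1,\dots,b_{q-2}$ must all be adjacent to $a_1$. Every vertex of $C$ must be adjacent to $a_1$ or $b_q$.

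\textbf{Step 2: use the forcing structure.} Because $A$, $B$, $C$ are forcing chains of $\mathcal{F}$, each force $a_i\to a_{i+1}$ requires $a_i$ to have no other uncolored neighbor at that moment. A vertex adjacent to many vertices of other chains constrains this timing. I would use this, together with Lemma~\ref{characterization2}, to bound the sizes. In $G[A\cup B]$, a parallel-path outerplanar graph, all of $b_1,\dots,b_{q-2}$ adjacent to $a_1$ and all of $a_3,\dots,a_p$ adjacent to $b_q$ forces a fan-like shape. In $G[A\cup C]$ and $G[B\cup C]$, each $c_k$ adjacent to $a_1$ or $b_q$ similarly forces fans. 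Combining three fans with a common hub set $\{a_1,b_q\}$ should contradict planarity once the chains are long. Concretely, I expect a $K_{3,3}$ minor using $a_1$, $b_q$ and pieces of the three chains when $n\ge 17$ and at least two chains have length $\ge 3$. The threshold $n\ge 17$ forces some chain to have at least $6$ vertices.

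\textbf{Step 3: handle the surviving configurations directly.} If planarity does not kill a case (e.g.\ $C$ short and $A$ or $B$ long), exhibit $S'$ explicitly. For instance, if $a_3,\dots,a_p$ are all adjacent to $b_q$, take $S'=S\setminus\{a_p\}$ or $S'=S\setminus\{a_{p-1}\}$. Some $a_i$ whose only uncolored neighbor is $b_q$ forces $b_q$. After that, $a_2$ (or $b_1$) has $a_1$ as its unique uncolored neighbor and forces it. The removed vertex is then recovered by reforcing along the now fully colored path, so $\mathcal{Z'}\subseteq\cl(S')$ and Proposition~\ref{nbhd} applies. The analogous choice $S'=S\setminus\{b_1\}$ works on the $B$ side.

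\textbf{Main obstacle.} The hard part is Step 2/3: making sure some vertex in $S'$ has \emph{exactly one} uncolored neighbor after the deletion, given that vertices adjacent to both $a_1$ and $b_q$ cannot force either of them. I would first try to show, via outerplanarity of the three pairwise subgraphs, that at most a bounded number (about two per pair) of vertices are adjacent to both $a_1$ and $b_q$. Then with $n\ge17$ there remain enough vertices adjacent to only one of them to carry out the reforcing argument, splitting into cases on which chain is longest.
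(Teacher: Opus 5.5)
Your Step 1 is correct; it only uses that forcing chains are induced paths. The deletion argument in Step 3 also works once $p\ge 5$: in $S\setminus\{a_p\}$, $a_3$ forces $b_q$, then $a_2$ forces $a_1$ and $a_{p-1}$ forces $a_p$. Symmetrically, $S\setminus\{b_1\}$ works for $q\ge 5$. Since $p+q\ge 10$ implies $\max(p,q)\ge 5$, this is a clean direct replacement for the paper's appeal to Theorem~\ref{cor2}.

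The gap is everything else, because the planarity contradiction you expect in Step 2 does not exist. Take $p=q=3$ and $r=n-6\ge 11$. Add to the three paths the edges $a_1b_1$ and $a_3b_3$, the edges $a_1c_i$ for $i\le k$, and the edges $b_3c_i$ for $i>k$, for some $1\le k<r$. Then:
\begin{itemize}
\item $\{a_1,b_1,c_1\}$ forces exactly along $A$, $B$, $C$.
\item $G$ is planar: draw $C$ outside the hexagon $a_1a_2a_3b_3b_2b_1$.
\item The internally disjoint $a_1$--$b_3$ paths through $a_2a_3$, $b_1b_2$ and $c_kc_{k+1}$ make $G$ non-outerplanar, so $Z(G)=3$ by Proposition~\ref{Z2}.
\item $S=V(G)\setminus\{a_1,b_3\}$ is a Z-set, and every vertex of $S$ is adjacent to $a_1$ or $b_3$.
\end{itemize}
So configurations with all three chains of length at least $3$ survive Step 1 with no $K_{3,3}$ minor. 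The mechanism of Step 3 needs $p\ge 5$ or $q\ge 5$, so it says nothing for $p,q\le 4$.

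The heuristic in your last paragraph also fails. For $p=q=1$, join $a_1$ to every vertex of $C$, and join $b_1$ to every vertex of $C$ except one $c_k$. This graph is planar with $Z(G)=\delta(G)=3$, and $S=C$ is a Z-set because $c_k$ forces $a_1$. Yet $r-1\ge 14$ vertices are adjacent to both uncolored vertices, so there is no bound on common neighbours.

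What is missing is the core of the paper's proof. When both chains containing the uncolored vertices are short, the third chain is long ($r\ge 9$), and $S'$ must be exhibited explicitly; there is no global obstruction to appeal to. The paper splits on the orders of those two chains.

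When $p\ge 3$ (or symmetrically $q\ge 3$), one may assume $a_pb_q\in E(G)$, or else Proposition~\ref{reforce} applies to $a_p$. Since $a_p$ sees no other uncolored vertex, $N_c[a_p]\cup\{c_r\}$ generates the terminus Z-set $\{a_p,b_q,c_r\}$ via Proposition~\ref{nbhd}. The only obstruction is $a_p$ being adjacent to all of $c_1,\dots,c_{r-1}$, which is handled by examining $c_r$.

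When $p,q\le 2$, the paper uses Proposition~\ref{fort} to find a vertex adjacent to exactly one of the two uncolored vertices. This relies on the hypothesis that $S$ itself is a Z-set, which your argument never exploits. It then builds $S'$ from that vertex's colored closed neighborhood plus the remaining starting or terminal vertices, using outerplanarity of the pairwise subgraphs to ensure $S'\neq S$. Without an argument of this kind for the short-chain cases, your proposal does not prove the lemma.
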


\begin{proof}
    Define $\mathcal{Z'}= \{u_1,v_1,w_1\}$ and Term$(\mathcal{F}) = \{u_x, v_y, w_z\}$, both of which are minimum Z-sets by \cite[Theorem 2.6]{barioli}. Let the forcing chains be labeled $\mathcal{F}_1 = \{u_1,..., u_x\}, \mathcal{F}_2 = \{v_1,..., v_y\}$, and $\mathcal{F}_3 = \{w_1,..., w_z\}$, and note that $\{u_i, v_j, w_k\}$ are pairwise distinct for any $i, j, k$. Without loss of generality, suppose the two uncolored vertices are $u_1$ and $w_z$, meaning that $V(G) \setminus S = \{u_1, w_z\}$. Now consider the induced subgraph $G'$ illustrated in Figure \ref{ppg} over $\mathcal{F}_1 \cup \mathcal{F}_3$, the two forcing chains that contain an uncolored vertex. Observe that $G'$ is outerplanar and $Z(G') \leq 2$ by Lemma \ref{characterization2}. 

\vspace{5pt}

\begin{figure}[H]
  \centering
  \begin{minipage}{0.48\textwidth}
    \centering
    \begin{tikzpicture}[scale=.65, transform shape]
        \node[main node, fill=white] (1) {$u_1$};
        \node[main node] (2) [right = 1cm of 1] {$u_2$};
        \node[main node] (3) [right = 1cm of 2] {$u_3$};
        \node[main node] (4) [right = 1cm of 3] {$u_4$};
        \node[main node] (5) [right = 1cm of 4] {$u_x$};

        \node[main node] (6) [below = 1cm of 1] {$v_1$};
        \node[main node] (7) [below = 1cm of 2] {$v_2$};
        \node[main node] (8) [below = 1cm of 3] {$v_3$};
        \node[main node] (9) [below = 1cm of 4] {$v_y$};
        
        \node[main node] (11) [below = 1cm of 6] {$w_1$};
        \node[main node] (12) [below = 1cm of 7] {$w_2$};
        \node[main node] (13) [below = 1cm of 8] {$w_3$};
        \node[main node] (14) [below = 1cm of 9] {$w_4$};
        \node[main node, fill=white] (15) [right = 1cm of 14] {$w_z$};

        \path[draw,thick]
        (1) edge node {} (2)
        (2) edge node {} (3)
        (3) edge node {} (4)

        (6) edge node {} (7)
        (7) edge node {} (8)

        (11) edge node {} (12)
        (12) edge node {} (13)
        (13) edge node {} (14);

        \path[draw]
        (1) edge[dashed] node {} (6)
        (7) edge[dashed] node {} (2)
        (9) edge[dashed] node {} (4)
        (7) edge[dashed] node {} (14)
        (9) edge[dashed] node {} (15)
        (11) edge[dashed] node {} (4)
        (5) edge[dashed] node {} (14);

        \path (4) -- node[xshift=0cm] {$\dots$} (5);
        \path (8) -- node[xshift=0cm] {$\dots$} (9);
        \path (14) -- node[xshift=0cm] {$\dots$} (15);
    \end{tikzpicture}
  \end{minipage}
  \hfill
  \begin{minipage}{0.48\textwidth}
    \centering
    \begin{tikzpicture}[scale=.65, transform shape]
        \node[main node, fill=white] (1) {$u_1$};
        \node[main node] (2) [right = 1cm of 1] {$u_2$};
        \node[main node] (3) [right = 1cm of 2] {$u_3$};
        \node[main node] (4) [right = 1cm of 3] {$u_4$};
        \node[main node] (5) [right = 1cm of 4] {$u_x$};
        
        \node[main node] (11) [below = 1cm of 1] {$w_1$};
        \node[main node] (12) [below = 1cm of 2] {$w_2$};
        \node[main node] (13) [below = 1cm of 3] {$w_3$};
        \node[main node] (14) [below = 1cm of 4] {$w_4$};
        \node[main node, fill=white] (15) [right = 1cm of 14] {$w_z$};

        \path[draw,thick]
        (1) edge node {} (2)
        (2) edge node {} (3)
        (3) edge node {} (4)

        (11) edge node {} (12)
        (12) edge node {} (13)
        (13) edge node {} (14);

        \path[draw]
        (11) edge[dashed] node {} (4)
        (5) edge[dashed] node {} (14);

        \path (4) -- node[xshift=0cm] {$\dots$} (5);
        \path (14) -- node[xshift=0cm] {$\dots$} (15);
    \end{tikzpicture}
  \end{minipage}

  \caption{A graph $G$ and outerplanar induced subgraph $G'$.}
  \label{ppg}
\end{figure}

    If the order of $G'$ is $n\geq 10$, we refer to Theorem \ref{cor2} to draw a contradiction by showing that $S \cap V(G')$ is not a minimal Z-set of $G'$, and thus $S$ is not a minimal Z-set of $G$. Otherwise consider the following cases split by the orders, $x$ and $z$ respectively, of the two parallel paths of $G'$, up to isomorphism. Given the specified Z-set $S$ of size $n-2$, we will use outerplanarity and Propositions \ref{reforce}, \ref{nbhd}, and \ref{fort} to show that $S$ is not minimal.
    
    \begin{description}
        \item[Case 1:]  Let $x \geq 3$ and $z \geq 1$ such that $x+z \leq 9$.

        First, if $u_xw_z \notin E(G)$, then we apply Proposition \ref{reforce} to $u_x$. Now assume $u_xw_z \in E(G)$. If there exists some $v_i \neq v_y$ such that $v_i$ is not adjacent to $u_x$, then Proposition \ref{nbhd} is satisfied by choosing $S' = N_c[u_x] \cup \{v_y\}$ and $\mathcal{Z} = \{u_x, v_y, w_z\}$.
        
        Otherwise each $v_i \in \{v_1,..., v_{y-1}\}$ is adjacent to $u_x$. By outerplanarity we know that $v_yu_1 \notin E(G)$, so we consider the edge $v_yw_z$. If $v_yw_z\notin E(G)$, then we apply Proposition \ref{reforce} to $v_y$. Else $v_yw_z\in E(G)$, and Proposition \ref{nbhd} is satisfied by choosing $S' = N_c[v_y] \cup \{u_x\}$ and $\mathcal{Z} = \{u_x, v_y, w_z\}$.

        \item[Case 2:]  Let $x=2$ and $z=2$.
        
        If there exists some $v_i$ adjacent to neither $u_1$ nor $w_2$, then apply Proposition \ref{reforce} to $v_i$. Let the symmetric difference $(A\cup B) \setminus (A \cap B)$ of two sets be defined $A \ominus B$. Now suppose there exists some $v_i \in N(u_1)\ominus N(w_2)$, and assume without loss of generality that $v_i$ is adjacent to $u_1$. Then, Proposition \ref{nbhd} is satisfied by choosing $S' = N_c[v_i] \cup \{v_1, w_1\}$ and $\mathcal{Z} = \{u_1, v_1, w_1\}$.
        
        Otherwise each $v_i \in \{v_1,..., v_y\}$ is adjacent to both $u_1$ and $w_2$. By Proposition \ref{fort}, at least one of $\{u_2, w_1\}$ is adjacent to exactly one uncolored vertex. Assume without loss of generality that this is $u_2$, implying that $u_2w_2\notin E(G)$. By outerplanarity, $v_iu_2\notin E(G)$ for each $v_i\in \{v_1,..., v_{y-1}\}$. Then, Proposition \ref{nbhd} is satisfied by choosing $S' = N_c[u_2] \cup \{v_1, w_1\}$ and $\mathcal{Z} = \{u_1, v_1, w_1\}$.

        \item[Case 3:]  Let $x=2$ and $z=1$.
        
        If there exists some $v_i$ adjacent to neither $u_1$ nor $w_1$, then apply Proposition \ref{reforce} to $v_i$. If there exists some $v_i$ adjacent to $w_1$ but not to $u_1$, then Proposition \ref{nbhd} is satisfied by choosing $S' = N_c[v_i] \cup \{u_2, v_y\}$ and $\mathcal{Z} = \{u_2, v_y, w_1\}$. Otherwise, every $v_i \in \{v_1,..., v_y\}$ is adjacent to $u_1$. 
        
        By outerplanarity, it follows that $v_iu_2\notin E(G)$ for each $v_i \in \{v_1,..., v_{y-1}\}$. First, if there exists some $v_i$ not adjacent to $w_1$, we consider subcases on $u_2w_1$. If $u_2w_1\notin E(G)$, let $S' = S \setminus \{v_j\}$ where we choose some $v_j\in N_c(v_i)\cap \{v_1,..., v_{y-1}\}$. Observe that $S'$ is a Z-set, implying that $S$ is not minimal, since $u_2 \rightarrow u_1$, $v_i \rightarrow v_j$, and finally $w_1$ is forced. If $u_2w_1 \in E(G)$, let $S' = S \setminus \{v_j\}$ where we choose some $v_j\notin N_c[v_i] \cup \{v_y\}$. Observe that $S'$ is a Z-set, again implying that $S$ is not minimal, since $v_i \rightarrow u_1$, $u_2 \rightarrow w_1$, and finally $v_j$ is forced. Otherwise each $v_i \in \{v_1,..., v_y\}$ is adjacent to both $u_1$ and $w_1$ and thus $u_2w_1 \notin E(G_n)$ by Proposition \ref{fort}. Consequently, Proposition \ref{nbhd} is satisfied by choosing $S' = S\setminus \{v_1,..., v_{y-2}\}$ and $\mathcal{Z} = \{u_2, v_y, w_1\}$.

        \item[Case 4:]  Let $x=1$ and $z=1$.
        
        By Proposition \ref{fort}, there exists some $v_i \in N(u_1) \ominus N(w_1)$. Assume without loss of generality that $v_i$ is adjacent to $u_1$. If there exists some $v_j\neq v_i$ adjacent to $w_1$, define $X = \{v_2,..., v_y\} \setminus (N[v_i]\cup N[v_j])$ and observe that Proposition \ref{nbhd} is satisfied by choosing $S' = S \setminus X$ and $\mathcal{Z} = \{u_1, v_1, z_1\}$.
        
        Otherwise $v_kw_1\notin E(G)$ for each $v_k\in \{v_1,..., v_y\}$, and thus $u_1w_1 \in E(G)$ since $G$ is connected. Observe that $S' = S \setminus \{v_j\}$ for some $v_j\notin N_c[v_i]$ is a Z-set, implying that $S$ is not minimal, since $v_i \rightarrow u_1$, $v_{j \pm 1} \rightarrow v_j$, and finally $u_1 \rightarrow w_1$.
    \end{description}

    Thus, $S$ is not minimal.
\end{proof}

Now we will show that if $Z(G) = 3$, then $\zbar < n-2$ for all graphs $G$ of order $n\geq 17$. This will effectively disprove the existence of a graph of order $n\geq 17$ satisfying both $Z(G) = 3$ and $\md(G) > n-6$.

\begin{theorem}\label{Side2}
    If $G$ is a graph of order $n\geq 17$ satisfying $Z(G) = 3$, then $\zbar < n-2$.
\end{theorem}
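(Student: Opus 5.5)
Since Proposition \ref{initial bound} and Proposition \ref{obs} already give $\zbar \leq n-2$ whenever $Z(G)=3$ (if $\zbar \geq n-1$ then $\md(G)=0$, which would force $Z(G)=\zbar\ge n-1$, impossible for $n\ge 17$), it suffices to show $\zbar \neq n-2$. So we assume, for contradiction, that $S$ is a minimal Z-set with $V(G)\setminus S=\{a,b\}$ and derive a proper zero forcing subset of $S$.

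First we dispose of disconnected $G$. Each component $H$ satisfies $Z(H)\le 3$, and a minimal Z-set of $G$ restricts to a minimal Z-set of each component. Most components are then small relative to their order, or are paths or graphs on two parallel paths. We then apply Theorem \ref{cor2} (or the trivial path bound) to the component containing the uncolored vertices, and conclude that $S$ misses far more than two vertices. The isolated-vertex cases are handled by noting that isolated vertices lie in every Z-set, so deleting them reduces $n$ and $Z$ by the same amount. Hence we may take $G$ connected.

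For connected $G$, we fix a minimum Z-set $\mathcal{Z'}=\{u_1,v_1,w_1\}$ with three maximal forcing chains $\mathcal{F}_1,\mathcal{F}_2,\mathcal{F}_3$ and terminus $\{u_x,v_y,w_z\}$. Both of these are minimum Z-sets by \cite[Theorem 2.6]{barioli}. If either $\{u_1,v_1,w_1\}\subseteq S$ or $\{u_x,v_y,w_z\}\subseteq S$, then $S$ properly contains a Z-set and is not minimal. Since only two vertices are uncolored, the remaining configurations are those in which one uncolored vertex is a start vertex and the other is a terminal vertex, with the case that one vertex is both. If these two vertices lie in different chains, this is exactly Lemma \ref{combined lemma}. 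The remaining case is that both lie in the same chain, say $a=u_1$ and $b=u_x$, which includes $x=1$, where $u_1$ is both start and end.

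\textbf{Key step, $x\ge2$.} Here we add a leaf, mimicking the proof of Theorem \ref{cor2}. One of $u_1,u_x$ is in the terminus of the forces of $S$; we attach a pendant leaf there, obtaining $G'$ of order $n+1$ with $Z(G')=3$. Then $S$ is still a Z-set of $G'$ of size $(n+1)-3$, which does not directly reduce to a known case. As an alternative, we work in the induced subgraph $G''$ on $\mathcal{F}_1\cup\mathcal{F}_2$. It has $Z(G'')\le 2$ by Lemma \ref{characterization2}, and $S\cap V(G'')$ misses only the two ends $u_1,u_x$ of one path. If $|G''|\ge 10$, the Case 4 and Case 6 style arguments in Proposition \ref{Side1} and Theorem \ref{cor2}, adapted with $\mathcal{F}_3$ fully colored, produce a proper zero forcing subset. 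If instead $\mathcal{F}_1\cup\mathcal{F}_2$ is short, then since $n\ge 17$ the chain $\mathcal{F}_3$ is long. In that case we use outerplanarity of $G[\mathcal{F}_1\cup\mathcal{F}_3]$ together with Propositions \ref{reforce}, \ref{nbhd} and \ref{fort}: some $w_i$ of $\mathcal{F}_3$ is either nonadjacent to $\{u_1,u_x\}$ (apply Proposition \ref{reforce}), or adjacent to exactly one of them, which lets us re-force a minimum Z-set from a small $N_c$-neighbourhood.

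\textbf{Case $x=1$.} Then $u_1$ is the single uncolored vertex of its chain, and $b$ is an endpoint of another chain. This is covered by the argument of Case 4 of Lemma \ref{combined lemma} with the roles relabeled.

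The main obstacle is the same-chain case with short chains, where neither Theorem \ref{cor2} nor Lemma \ref{combined lemma} applies directly. I expect a bounded case split on the lengths of the two short chains, exploiting $n\ge17$ to guarantee a long third chain with many fully colored vertices.
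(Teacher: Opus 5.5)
Your skeleton matches the paper's. You reduce to $\zbar\neq n-2$, dispose of disconnected $G$, and use that $\{u_1,v_1,w_1\}$ and $\{u_x,v_y,w_z\}$ are both minimum Z-sets, so one uncolored vertex is a start vertex and the other a terminal vertex. The different-chain configuration then goes to Lemma~\ref{combined lemma}.

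\textbf{The same-chain case is not finished.} This is the case $V(G)\setminus S=\{u_1,u_x\}$ with $x\ge 2$. The leaf attachment leads nowhere, and the $|G''|\ge 10$ branch is only an appeal to ``adapted'' arguments. You end by forecasting an unspecified case split for short chains, but that obstacle does not exist.

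The missing step, which is how the paper argues, is to pair $\mathcal{F}_1$ with the \emph{longer} of the other two chains. Then $|G'|=x+\max(y,z)\ge \frac{n+x}{2}\ge\frac{19}{2}$, so $|G'|\ge 10$ and Theorem~\ref{cor2} applies. Suppose $T$ is a proper zero forcing subset of $S\cap V(G')$ in $G'$. Then $T\cup\big(V(G)\setminus V(G')\big)$ is a proper zero forcing subset of $S$ in $G$. This holds because every vertex outside $G'$ is blue from the start, so every force valid in $G'$ is valid in $G$.

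You should also check that $S\cap V(G')$ is a Z-set of $G'$:
\begin{itemize}
\item For $x\ge 4$, $u_2\to u_1$ and $u_{x-1}\to u_x$.
\item For $x\in\{2,3\}$, outerplanarity allows at most one vertex of the other chain to be adjacent to both $u_1$ and $u_x$. So some vertex of that chain is adjacent to exactly one of them and forces it, or to neither, and then Proposition~\ref{reforce} finishes.
\end{itemize}

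Alternatively, you can avoid Theorem~\ref{cor2}. Your own ``short'' argument already works whenever some other chain has at least $5$ vertices, because $N_c[w_i]\cup\{v_1,w_1\}$ (or $N_c[w_i]\cup\{v_y,w_z\}$) contains at most four vertices of that chain. If both other chains have at most $4$ vertices, then $x\ge 9$, and $\{u_2,u_3\}\cup\mathcal{F}_2\cup\mathcal{F}_3$ is a proper zero forcing subset of $S$.

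\textbf{The Case $x=1$ claim is unjustified.} You assert that $b$ is an endpoint of another chain, but this does not follow. A singleton chain $u_1=u_x$ lies in both minimum Z-sets, so neither is contained in $S$ whatever $b$ is. Hence $b$ may be an interior vertex $v_j$.
\begin{itemize}
\item When $b$ is an endpoint, Lemma~\ref{combined lemma} applies directly (not only its Case 4), since $u_1$ is simultaneously a start and a terminal vertex.
\item When $b$ is interior, a separate argument is needed.
\end{itemize}
The paper's proof shares this blind spot. Its claim that an uncolored interior vertex puts $\{u_1,v_1,w_1\}$ or $\{u_x,v_y,w_z\}$ inside $S$ fails exactly when the other uncolored vertex is a singleton chain. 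You should treat that configuration explicitly rather than lean on the paper's reduction.
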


\begin{proof}
Let $G$ be a graph of order $n\geq 17$ satisfying $Z(G) = 3$. By Proposition \ref{obs}, we know that $\zbar < n-1$. Thus for the sake of contradiction, let $\zbar = n-2$. Consider a minimum Z-set $\mathcal{Z}= \{u_1,v_1,w_1\}$. The graph $G$ can be visualized as three forcing chains of the set $\mathcal{Z}$ of orders $x$, $y$, and $z$ such that $n=x+y+z$. Note also that by \cite[Theorem 2.6]{barioli}, $\{u_x, v_y, w_z\}$ is a minimum Z-set as well. We define $\{u_1, u_x, v_1, v_y, w_1, w_z\}$ to be exterior vertices and $V(G_n)\setminus \{u_1, u_x, v_1, v_y, w_1, w_z\}$ to be interior vertices.

If $G$ is disconnected, then $G$ is either three disjoint paths and $\zbar \leq 6 < n-2$, or $G$ is two connected paths disjoint with another path. In either case, because $|G|\geq 17$, we can take the induced subgraph over two of these paths and obtain a subgraph $G'$ or order $n\geq 10$. Thus, we apply Theorem \ref{cor2} to $G'$ to show that $\overline{Z}(G') < n-2$ and therefore $\zbar < n-2$. Thus, we will only consider connected graphs. Note that, as proven in Lemma \ref{characterization2}, the induced subgraph over any two forcing chains is outerplanar. Consider the following characterizations of $V(G)\setminus S$, where $S$ is a minimal Z-set of maximum size, $|S| = n-2$.

First, if there exists some interior vertex $p\in V(G)\setminus S$, then either ${\{u_1, v_1, w_1\} \subset S}$ or ${\{u_x, v_y, w_z\}\subset S}$. Both of these are zero forcing proper subsets of $S$, contradicting the minimality of $S$. Thus, all interior vertices must be in $S$. In each of the remaining cases, we will also draw a contradiction to show that $S$ is not minimal. Consider the following cases where all interior vertices are in $S$:

    \begin{description}
        \item[Case 1:] Suppose the vertices $V(G)\setminus S$ are on the same end. Without loss of generality, say $V(G) \setminus S = \{u_1, w_1\}$.
        
        Then the Z-set $\{u_x, v_y, w_z\} \subset S$, contradicting $S$ being minimal.

        \item[Case 2:] Suppose the vertices $V(G)\setminus S$ are on opposite ends and on different paths. Without loss of generality, say $V(G) \setminus S = \{u_1, w_z\}$.
        
        By Lemma \ref{combined lemma}, it can be seen that $S$ is not a minimal Z-set of $G$, a contradiction.
        
        \item[Case 3:] Suppose the vertices $V(G)\setminus S$ are on opposite ends of the same path subgraph. Without loss of generality, say $V(G) \setminus S = \{u_1, u_x\}$. 
        
        Consider the induced subgraph $G'$ of the vertices in this path and the remaining path of higher order, say $\{w_1,..., w_z\}$. $G'$ must then have order 10 or greater, because if the first path has order $x\geq2$, then the second must have order $z\geq \big\lceil \frac{17-x}{2}\big\rceil \geq 8$, summing to $|G'| = x + z \geq 10$. We can then refer to Theorem \ref{cor2} to show that $S \cap V(G')$ is not a minimal Z-set of $G'$, and thus $S$ is not a minimal Z-set of $G_n$, a contradiction.
    \end{description}

    Thus by contradiction we conclude that if $G$ is a graph of order $n\geq17$ satisfying $Z(G) = 3$, then $\zbar \neq n-2$. We then conclude that under the constraints of this proof, $\zbar < n-2$.
\end{proof}

Combining our previous results, we now make our main claim that $n-6$ is indeed a sharp upper bound on the minimal zero forcing set discrepancy.

\begin{theorem}\label{one theorem to rule them all}
    For any graph $G$ of order $n\geq 17$, the minimal zero forcing set discrepancy ${\md(G) \leq n-6}$, and this bound is sharp for an infinite family of graphs.
\end{theorem}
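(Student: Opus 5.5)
The proof will assemble the earlier results through a case analysis on $Z(G)$ and $\zbar$. Sharpness is already handled: Proposition \ref{n-6 family} supplies the infinite family $\mathcal{G}=\{G_n\}$ with $\md(G_n)=n-6$ for every $n\geq 6$, and in particular for every $n\geq 17$. So the work lies in the upper bound $\md(G)\leq n-6$ for every graph $G$ of order $n\geq 17$.

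We split on $\zbar$ and $Z(G)$.

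\textbf{Case $\zbar\geq n-1$.} Proposition \ref{obs} gives $\md(G)=0\leq n-6$.

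\textbf{Case $Z(G)=1$.} Then $G$ is a path, so $\md(G)\leq 1\leq n-6$.

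\textbf{Remaining cases.} Here $\zbar\leq n-2$ and $Z(G)\geq 2$. Suppose, for contradiction, that $\md(G)\geq n-5$. Then $Z(G)+n-5\leq \zbar\leq n-2$, which forces $Z(G)\leq 3$. This leaves exactly the three combinations listed after Proposition \ref{initial bound}, plus possibly $Z(G)=2$ with $\zbar = n-3$ counted in both ways. We eliminate them as follows.

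\begin{itemize}
\item If $Z(G)=2$, then Theorem \ref{cor2} applies since $n\geq 17\geq 10$. It gives $\zbar<n-3$, so $\zbar\leq n-4$ and $\md(G)\leq n-6$, a contradiction.
\item If $Z(G)=3$, then Theorem \ref{Side2} gives $\zbar<n-2$, so $\zbar\leq n-3$ and $\md(G)\leq n-6$, again a contradiction.
\end{itemize}

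If one prefers not to argue by contradiction, the bound follows directly. When $Z(G)\geq 4$, we have $\md(G)\leq (n-2)-4=n-6$. When $Z(G)=3$ or $Z(G)=2$, the two theorems give $\md(G)\leq n-6$ as computed above.

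I expect no step to be a genuine obstacle, since the heavy lifting sits in Theorems \ref{cor2} and \ref{Side2}. The only care needed is bookkeeping. First, the hypotheses of those results must be met: the order bounds $n\geq 10$ and $n\geq 17$ hold, and disconnected graphs are already handled inside those proofs. Second, the case $Z(G)\geq 4$ relies on $\zbar\leq n-2$, which must be justified via Proposition \ref{obs} once $\zbar\geq n-1$ has been excluded.
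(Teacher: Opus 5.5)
Your proposal is correct and is essentially the paper's proof. The paper likewise combines Proposition~\ref{initial bound} with Theorem~\ref{cor2} for $Z(G)=2$ and Theorem~\ref{Side2} for $Z(G)=3$, and cites Proposition~\ref{n-6 family} for sharpness; the proof of Proposition~\ref{initial bound} is exactly your disposal of the cases $\zbar\geq n-1$ (via Proposition~\ref{obs}) and $Z(G)=1$, which leaves $\zbar\leq n-2$ and makes $Z(G)\geq 4$ immediate. You have written out the bookkeeping the paper leaves implicit; your aside about $Z(G)=2$, $\zbar=n-3$ being ``counted in both ways'' is redundant, since that combination is already one of the three listed.
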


\begin{proof}
    Proposition \ref{initial bound}, Theorem \ref{cor2}, and Theorem \ref{Side2} prove together that ${\md(G) \leq n-6}$, and Proposition \ref{n-6 family} shows that this bound is sharp for an infinite family of graphs.
\end{proof}

\section{Maximizing propagation time discrepancy}\label{pd}

We pose another question, similarly structured to that of Section \ref{zbar-z}: What is the largest possible value of the propagation time discrepancy of a graph between minimum Z-sets, $\pd(G)=\PT(G)-\pt(G)$? First, we note the bounds presented in \cite{chilakamarri}.

\begin{corollary}\textnormal{\cite[Theorem 2.5]{chilakamarri}}\label{bounds}
    For any graph $G$,
    \[\PT(G) \leq n-Z(G), \text{ and }\]
    \[\pt(G) \geq \frac{n-Z(G)}{Z(G)}.\]
\end{corollary}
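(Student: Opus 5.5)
Both inequalities follow from counting forces per time step, applied to an arbitrary minimum zero forcing set $S$ with $|S| = Z(G)$. Fix such an $S$ and consider the standard propagation process. At each time step every blue vertex with exactly one uncolored neighbor forces that neighbor, and each force colors exactly one previously uncolored vertex. The process therefore adds exactly $n - Z(G)$ new blue vertices in total. The two bounds come from a lower bound and an upper bound on the number of forces performed at a single time step.

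For the first inequality, I would note that in any step before $V(G)$ is fully colored, at least one force must occur. Otherwise the closure would stabilize at a proper subset of $V(G)$, contradicting that $S$ is a Z-set. Since at least one new vertex is colored per step and only $n - Z(G)$ vertices need coloring, the propagation time of $S$ is at most $n - Z(G)$. This holds for every minimum Z-set, so it holds in particular for the one realizing $\PT(G)$.

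For the second inequality, I would organize the forces of $S$ into its $Z(G)$ maximal forcing chains, one starting at each vertex of $S$. The point to verify is that each chain grows by at most one vertex per step. Every vertex forces at most once, and within a chain the only vertex that can still force is its current last vertex. Hence at most $Z(G)$ forces occur in any single step. After $t$ steps, then, at most $Z(G) + tZ(G)$ vertices are blue. Requiring $Z(G)(1+t) \geq n$ gives $t \geq \frac{n - Z(G)}{Z(G)}$. Since this holds for every minimum Z-set, it holds for the one achieving $\pt(G)$.

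The main point needing care is the claim that there are at most $Z(G)$ forces per step. To justify it, I would observe that the forcing vertices in a given step are distinct and each performs its unique force. The forcing chains partition $V(G)$, each vertex belongs to exactly one chain, and a vertex that has not yet been forced cannot force. Together these imply that two simultaneous forcers cannot lie in the same chain.
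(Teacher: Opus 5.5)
Your proposal is correct and is essentially the same counting argument the paper gives after citing Chilakamarri et al. At least one vertex is forced per time step, which gives $\PT(G)\le n-Z(G)$. At most $Z(G)$ vertices are forced per time step, which gives $\pt(G)\ge \frac{n-Z(G)}{Z(G)}$. Your forcing-chain justification of the ``at most $Z(G)$ forces per step'' claim supplies the detail the paper leaves implicit.
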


For a minimum Z-set $S$ of a graph $G$, if only one vertex in $V(G)\setminus S$ is forced at each timestep, then propagation time is maximized, having the value $\PT(G) = n-Z(G)$. Assume instead that $|S|$ vertices are forced at each time step, except for the last time step in which fewer than $|S|$ vertices may remain to be forced. Then, the propagation time is minimized, having the value $\pt(G) = \left\lceil \frac{n-Z(G)}{Z(G)}\right\rceil \geq \frac{n-Z(G)}{Z(G)}$. Note the use of the ceiling function, as $\big(n-Z(G)\big) \bmod \big(Z(G)\big)\neq 0$ implies that another time step is needed to fully color the graph. 

We will first define an upper bound on propagation time discrepancy in terms of the order of a graph, and then show that this bound is sharp.

\begin{theorem}\label{pd upper bound}
  For any graph $G$ of order $n$ and propagation time discrepancy ${\pd(G)=\PT(G)-\pt(G)}$, $$\pd(G) \leq \left\lfloor n-2\sqrt{n}+1 \right\rfloor,$$ and this bound is sharp for an infinite family of graphs.
\end{theorem}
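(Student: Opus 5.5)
The plan is to bound $\pd(G)$ in terms of $n$ and $k = Z(G)$, and then maximize over $k$. By Corollary \ref{bounds}, $\PT(G) \leq n-k$ and $\pt(G) \geq \lceil (n-k)/k \rceil$. The ceiling is justified because propagation time is an integer. Hence
\[
\pd(G) \leq n-k-\frac{n-k}{k} = n+1-\Big(k+\frac{n}{k}\Big).
\]
By AM--GM, $k+\frac{n}{k}\geq 2\sqrt{n}$, so $\pd(G)\leq n-2\sqrt{n}+1$. Since $\pd(G)$ is an integer, this gives $\pd(G)\leq \lfloor n-2\sqrt{n}+1\rfloor$. The only care needed is the degenerate case $k=n$ (empty graph), where both times are $0$ and the bound $n-2\sqrt n+1=(\sqrt n-1)^2\ge 0$ holds trivially.

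For sharpness I would take $n=m^2$ and look for a graph with $Z(G)=m$ whose minimum Z-sets realize both extremes:
\begin{itemize}
\item $\PT(G)=n-m$, meaning some minimum Z-set forces exactly one vertex per step;
\item $\pt(G)=(n-m)/m=m-1$, meaning some minimum Z-set forces $m$ vertices at every step.
\end{itemize}
Then $\pd(G)=m^2-m-(m-1)=m^2-2m+1=n-2\sqrt{n}+1$, which is exactly the bound.

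The natural candidate is an $m\times m$ structure with $Z=m$ in which forcing from one side proceeds in parallel along $m$ chains of length $m$, while forcing from another minimum Z-set is strictly sequential. I would first test the grid $P_m\square P_m$, which has $Z=m$. Forcing from one full side gives propagation time $m-1$. Starting from a corner-ish set or a diagonal might force one at a time, but I doubt it yields $n-m$. A more promising approach is to build a graph from $m$ disjoint paths $P_m$ (the parallel chains), then add edges so that:
\begin{itemize}
\item from the set of first vertices, all chains advance in lockstep (so $\pt=m-1$);
\item some other minimum Z-set produces a single Hamiltonian-like forcing order in which each step enables exactly one force.
\end{itemize}
One concrete try: take the $m$ paths $P^{(1)},\dots,P^{(m)}$ and join the last vertex of $P^{(i)}$ to the first vertex of $P^{(i+1)}$. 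If this stays a graph on $m$ "parallel" chains with $Z=m$, then a set like $\{$first vertex of $P^{(1)}\}$ together with suitable vertices whose forces are blocked until the previous path finishes would give sequential forcing. To force strict sequentiality I would add "blocking" edges, say from the $j$-th vertex of $P^{(i+1)}$ to the last vertex of $P^{(i)}$, in a pattern that keeps $Z=m$ and still lets the first-vertex set propagate in parallel.

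I expect the main obstacle to be this construction. The family must simultaneously:
\begin{itemize}
\item keep $Z(G)=m$, verified via $Z\ge\delta$ or a fort argument with \cite[Proposition 2.2]{alameda};
\item admit a minimum Z-set forcing in perfect parallel;
\item admit another minimum Z-set forcing one vertex per step.
\end{itemize}
I would verify $Z=m$ with a fort or minimum-degree argument, and check both propagation schedules directly, first on small cases $m=2,3$. For $m=2$, $n=4$, the path $P_4$ already has $Z=1$, so I would instead look for a 4-vertex graph with $Z=2$, $\PT=2$, $\pt=1$ (e.g.\ $C_4$ or the paw) to guide the general pattern.
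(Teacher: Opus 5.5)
Your upper bound is correct and follows the paper's argument. AM--GM replaces the paper's derivative computation and certifies the maximum more cleanly. The gap is the sharpness claim, which you do not prove: you exhibit no family, and the target you aim for is needlessly rigid. Demanding $n=m^2$ and $\pt(G)=m-1$ means every one of the $m$ chains must advance at every step. Write $L_t$ for the vertices colored at time $t-1$ and $(x,t)$ for the vertex of chain $x$ in layer $t$. Then $(x,t)$ can have no neighbor outside $L_1\cup\dots\cup L_t\cup\{(x,t+1)\}$, so the only edges between different layers are the consecutive edges inside chains.

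Your concrete attempt breaks this immediately:
\begin{itemize}
\item The edge from the last vertex of $P^{(i)}$ to the first vertex of $P^{(i+1)}$ joins layer $m$ to layer $1$. The first vertex of $P^{(i+1)}$ then has two white neighbors at time $0$, so the first-vertex set cannot force in lockstep.
\item The blocking edges join layer $j<m$ to layer $m$ across chains, which is likewise forbidden.
\item Without any of these extra edges the joined paths are just $P_{m^2}$, with $Z=1$.
\end{itemize}
Worse, the target is unattainable at $m=2$: every graph of order $4$ has $\pd(G)=0$. For example, $C_4$ has $\PT=\pt=1$ and the paw has $\PT=\pt=2$, so your proposed small test cases cannot succeed.

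The missing idea is to use the slack in the ceiling. Take $n=k^2+1$ with $Z(G)=k$. Then $\lceil (n-k)/k\rceil=k$, so a fast set only needs propagation time $k$, leaving $k-1$ unused forcing slots. Yet still $\lfloor n-2\sqrt n+1\rfloor=\lfloor(\sqrt{k^2+1}-1)^2\rfloor=(k-1)^2=n-2k$.

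The paper realizes this with $P_n$ plus the chords $v_{ik}v_{ik+2}$ for $1\le i<k$, with $k$ odd:
\begin{itemize}
\item The $k-1$ vertex-disjoint triangles each cost an extra path in any induced path cover, so $Z(G)\ge P(G)\ge k$.
\item The set $\{v_{ik+1}:0\le i\le k-1\}$ consists of an endpoint plus the degree-$2$ triangle vertices. Those triangle vertices stay blocked until the single front reaches them, so exactly one vertex is colored per step, giving $\PT=n-k$.
\item The set $\{v_1\}\cup\{v_{ik},v_{ik+1}: i=2,4,\dots,k-1\}$ spreads from each pair in both directions and finishes in $k$ steps, giving $\pt=k$.
\end{itemize}
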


\begin{proof}
    Let $G$ be a graph of order $n$. Corollary \ref{bounds} yields
    \begin{align*}
        \pd(G) &= \PT(G) - \pt(G) \\
            &\le \big(n - Z(G)\big) - \left( \frac{n - Z(G)}{Z(G)} \right) \\
            &= n - Z(G) - \frac{n}{Z(G)} + 1.
    \end{align*}

    \vspace{5pt}

    To find the value of $Z(G)$ that maximizes $\pd(G)$ for any $n$, we take the partial derivative of this equation with respect to $Z(G)$, which yields

    \[-1 + \frac{n}{Z(G)^2}.\]
    \vspace{5pt}

    Setting this equal to 0 and solving for $Z(G)$ yields $Z(G) = \sqrt{n}$. Plugging this back into the equation gives us an upper bound for any graph: $\pd(G) \leq n-\sqrt{n}-\frac{n}{\sqrt{n}}+1 = n-2\sqrt{n}+1$. Since $\pd(G)$ is a nonnegative integer, we can apply the floor function to this quantity. Thus the propagation time discrepancy for any graph is bounded by \[\pd(G) \leq \left\lfloor n-2\sqrt{n}+1 \right\rfloor .\]

To show that this upper bound is sharp, we define an infinite family $\mathcal{G} = \{G_n\}$ as follows. Let $G_n=P_n+E$ where $P_n$ is a path of order $n$ with vertices labeled $\{v_1,v_2,v_3,...,v_n\}$ and $E=\{v_{ik}v_{ik+2}\; | \; 1 \leq i < k\}$, where $n=k^2+1$ for an odd integer $k \geq 3$. See Figure \ref{maxpd} for $G_{26}$, the graph for $k=5$. 

    \vspace{5pt}
    
    \begin{figure}[H]
    \begin{center}
    \begin{tikzpicture}[scale=.65, transform shape]
    \node[main node, fill=white] (1) {$v_1$};
    \node[main node, fill=white] (2) [right = 1cm of 1] {$v_2$};
    \node[main node, fill=white] (3) [right = 1cm of 2] {$v_3$};
    \node[main node, fill=white] (4) [right = 1cm of 3] {$v_4$};
    \node[main node, fill=white] (5) [right = 1cm of 4] {$v_5$};
    \node[main node, fill=white] (6) [below = 1cm of 5] {$v_6$};
    \node[main node, fill=white] (7) [left = 1cm of 6] {$v_7$};
    \node[main node, fill=white] (8) [left = 1cm of 7] {$v_8$};
    \node[main node, fill=white] (9) [left = 1cm of 8] {$v_9$};
    \node[main node, fill=white] (10) [left = 1cm of 9] {$v_{10}$};
    \node[main node, fill=white] (11) [below = 1cm of 10] {$v_{11}$};
    \node[main node, fill=white] (12) [right = 1cm of 11] {$v_{12}$};
    \node[main node, fill=white] (13) [right = 1cm of 12] {$v_{13}$};
    \node[main node, fill=white] (14) [right = 1cm of 13] {$v_{14}$};
    \node[main node, fill=white] (15) [right = 1cm of 14] {$v_{15}$};
    \node[main node, fill=white] (16) [below = 1cm of 15] {$v_{16}$};
    \node[main node, fill=white] (17) [left = 1cm of 16] {$v_{17}$};
    \node[main node, fill=white] (18) [left = 1cm of 17] {$v_{18}$};
    \node[main node, fill=white] (19) [left = 1cm of 18] {$v_{19}$};
    \node[main node, fill=white] (20) [left = 1cm of 19] {$v_{20}$};
    \node[main node, fill=white] (21) [below = 1cm of 20] {$v_{21}$};
    \node[main node, fill=white] (22) [right = 1cm of 21] {$v_{22}$};
    \node[main node, fill=white] (23) [right = 1cm of 22] {$v_{23}$};
    \node[main node, fill=white] (24) [right = 1cm of 23] {$v_{24}$};
    \node[main node, fill=white] (25) [right = 1cm of 24] {$v_{25}$};
    \node[main node, fill=white] (26) [right = 1cm of 25] {$v_{26}$};

    \path[draw,thick]
    (1) edge node {} (2)
    (2) edge node {} (3)
    (3) edge node {} (4)
    (4) edge node {} (5)
    (5) edge node {} (6)
    (6) edge node {} (7)
    (7) edge node {} (8)
    (8) edge node {} (9)
    (9) edge node {} (10)
    (10) edge node {} (11)
    (11) edge node {} (12)
    (12) edge node {} (13)
    (13) edge node {} (14)
    (14) edge node {} (15)
    (15) edge node {} (16)
    (16) edge node {} (17)
    (17) edge node {} (18)
    (18) edge node {} (19)
    (19) edge node {} (20)
    (20) edge node {} (21)
    (21) edge node {} (22)
    (22) edge node {} (23)
    (23) edge node {} (24)
    (24) edge node {} (25)
    (25) edge node {} (26)
    
    (5) edge node {} (7)
    (10) edge node {} (12)
    (15) edge node {} (17)
    (20) edge node {} (22);
    
    \end{tikzpicture}

    \vspace{5pt}
    \caption{The graph $G_{26}$ satisfying $\pd(G_n) = \left\lfloor n-2\sqrt{n}+1\right\rfloor$.}
    \label{maxpd}

    \end{center}
    \end{figure}

We first show that $Z(G_n) = k$ by using the path cover number of $G_n$. The \textit{path cover number} of a graph, denoted $P(G)$, is the minimum number of vertex-disjoint induced path subgraphs such that all vertices of $G$ are in exactly one path. 
In each graph $G_n = P_n + E$ of the family $\mathcal{G}$, there are $k-1$ vertex-disjoint $K_3$ cycles along $P_n$, each formed by some edge in $E$. For an induced subgraph to be a path, not all vertices of a $K_3$ cycle can be in the subgraph. Thus, each $K_3$ cycle necessitates adding another path in the path cover. Since there are $k-1$ vertex-disjoint $K_3$ cycles in $G_n$, at least $k$ induced path subgraphs are necessary to cover the entire graph. Observe then that the path cover number $P(G_n) \geq k$. By \cite[Theorem 2.13]{hogben2}, $Z(G_n) \geq P(G_n) \geq k$.
Also, we have a Z-set $S = \big\{v_{ik+1} \; | \; i \in \{0,1,2,...,k-1\}\big\}$ of size $k$ so $Z(G_n) \leq k$. Thus $Z(G_n)=k$.
    
Now consider two minimum Z-sets of $G_n$,

    \[\mathcal{Z} = \big\{v_{ik}\; | \; i \in \{2,4,6,...,k-1\}\big\}\cup \big\{v_{ik+1}\; | \; i \in \{0,2,4,...,k-1\}\big\} \text{ and}\] 
    \[\mathcal{Z'} = \big\{v_{ik+1}\; | \; i \in \{0,1,2,...,k-1\}\big\}.\]

The set $\mathcal{Z}$ has propagation time $\pt(G_n)=k$, while $\mathcal{Z'}$ has propagation time $\PT(G_n) = n-k$. These can be verified to be the extreme propagation times using the results of Corollary \ref{bounds} with $n=k^2+1$ and $Z(G)=k$. The difference of these two values results in
    \[\pd(G_n) = (n-k)-k = n-2k = k^2-2k+1 = (k-1)^2.\]

Now observe that \[\big\lfloor n -2\sqrt{n}+1 \big\rfloor = \big\lfloor k^2+1-2\sqrt{k^2+1}+1 \big\rfloor= \big\lfloor(\sqrt{k^2+1}-1)^2 \big\rfloor.\]

It can be shown algebraically that \[(k-1)^2 < (\sqrt{k^2+1}-1)^2 < (k-1)^2+1,\]

\noindent because these three functions of $k$ are increasing and nonintersecting for $k\geq 3$. Thus, we have

\[\pd(G_n)=(k-1)^2 = \big\lfloor(\sqrt{k^2+1}-1)^2 \big\rfloor = \big\lfloor n-2\sqrt{n}+1\big\rfloor.\]

\end{proof}

\subsection{More attainable propagation time discrepancy values}

Similar to Corollary \ref{allmd}, we will also show that for any nonnegative integer $x$, there exists a graph $G_n$ such that $\pd(G_n)= x$. To do so, we first present another infinite family of graphs.

\begin{proposition}\label{spiderprop}
    There exists an infinite family of graphs $\mathcal{G} = \{G_n\}$ such that $\pd(G_n) = \left\lceil  \frac{n}{2} \right\rceil -3$ for all $n \geq 5$.
\end{proposition}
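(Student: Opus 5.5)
I would take $G_n$ to be a spider with three legs, one of them a single leaf and the other two as long and as balanced as possible. Concretely, write $G_n = S_{1,p,q}$ with $1+p+q+1=n$ and $p,q$ as equal as parity allows. Label the branch point $c$, the short leg $s$, and the long legs $x_1,\dots,x_p$ and $y_1,\dots,y_q$, where $x_1,y_1$ are adjacent to $c$. The exact split of $p$ and $q$ is the parameter I would tune at the end so that the formula comes out as exactly $\lceil n/2\rceil-3$. For $n=5$ the graph degenerates to $S_{1,1,2}$, and I would check this case by hand.

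The first step is $Z(G_n)=2$. The upper bound comes from the set $\{s,x_p\}$. For the lower bound, $G_n$ is not a path, so $Z(G_n)\neq 1$. Alternatively, the path cover number of a spider with three legs is $2$, which gives the bound through \cite[Theorem 2.13]{hogben2} as in the proof of Theorem \ref{pd upper bound}.

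For $\PT$, I would use the set $\{s,x_p\}$. At the first step, $s\to c$, while $x_p$ forces back down its leg one vertex per step. The vertex $c$ cannot force until $x_1$ is blue, so it waits. Once $x_1$ is colored, $c\to y_1$, and then $y_1,\dots,y_q$ are colored one per step. Exactly one vertex is colored at almost every step, so the propagation time is $p+q=n-2$. This meets the bound $\PT(G)\le n-Z(G)$ of Corollary \ref{bounds}, so $\PT(G_n)=n-2$.

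The real work, and the main obstacle, is $\pt$: a lower bound over every minimum Z-set. I would classify all Z-sets of size $2$ in $S_{1,p,q}$ using the fort criterion \cite[Proposition 2.2]{alameda}.
\begin{itemize}
\item Each set $\{s\}\cup L_x$ and $\{s\}\cup L_y$, where $L_x$ and $L_y$ are the vertex sets of the long legs, together with $c$ as needed, forms a fort. This forces any Z-set of size $2$ either to contain $s$, or to meet both long legs appropriately.
\item Testing the remaining configurations, I expect the Z-sets of size $2$ to be essentially: $s$ together with a vertex at the end of a chain in one long leg (namely $x_1$ or $x_p$, or symmetrically on the $y$ leg), and pairs consisting of one endpoint-adjacent choice on each long leg ($x_p$ or $x_1$ paired with $y_q$ or $y_1$).
\end{itemize}
For each surviving set, the propagation time is easy to compute, because every forcing chain is a subpath of the spider. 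The fastest set should be $\{s,x_1\}$ (or its mirror $\{s,y_1\}$). With this set, $s\to c$ and $x_1\to x_2$ happen at step $1$, and after that the two long legs fill in parallel. The time is then about $\max(p-1,\,q+1)$, which is roughly $n/2$. I would argue that no size-$2$ set can do better: at most two vertices are forced per step, and $c$ can only force $y_1$ after both $s$ and $x_1$ are blue. This gives a lower bound of about $\lceil (n-2)/2\rceil$ plus the unavoidable one-step delay at $c$.

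Finally, I would choose $p$ and $q$ (presumably $q=p-1$ or $q=p-2$, depending on the parity of $n$) so that
\[\pd(G_n)=(n-2)-\pt(G_n)=\left\lceil \tfrac{n}{2}\right\rceil-3,\]
and verify this on the small cases $n=5,6,7$ to pin down the off-by-one terms. If the balanced spider gives $\lceil n/2\rceil-2$ instead, I would unbalance the legs by one vertex to fix the constant.
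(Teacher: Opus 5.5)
\textbf{Verdict.} Your family is the paper's (the spider $S_{1,\lfloor n/2\rfloor-1,\lceil n/2\rceil-1}$), and so is your plan of showing $Z(G_n)=2$ and then going through all size-$2$ Z-sets. However, both propagation-time computations are wrong, and they are the entire content of the proof.

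\textbf{The error in $\PT$.} Under $\{s,x_p\}$ with $p\ge 2$, the forces $s\to c$ and $x_p\to x_{p-1}$ both happen at step $1$. So the time is $p+q-1=n-3$, not $p+q$. In fact each of the seven size-$2$ Z-sets of $S_{1,p,q}$ with $p,q\ge 2$ has some step with two simultaneous forces. Hence $\PT(G_n)=n-3$, and the bound of Corollary~\ref{bounds} is not attained.

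\textbf{The error in $\pt$.} Under $\{s,x_1\}$, $x_1$ cannot force at step $1$, because $c$ is still uncolored; the time is $\max(p,q+1)$. Also, $\{x_1,y_1\}$ is not a Z-set. Done correctly (with $p\le q$), the enumeration gives $\pt=\max(p+1,q)$, attained by $\{x_p,y_q\}$ and $\{s,y_1\}$. For the balanced split this equals $\lfloor n/2\rfloor$, so $\pd=(n-3)-\lfloor n/2\rfloor=\lceil n/2\rceil-3$ with no tuning at all. Your heuristic lower bound ``$\lceil (n-2)/2\rceil$ plus a one-step delay at $c$'' also overshoots for odd $n$: $S_{1,2,3}$ has $\pt=3$ with no delay.

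\textbf{Why the fallback fails.} Your plan to unbalance the legs is where the argument would produce a false proof. The off-by-one error lives in $\PT$, not in the leg split, and unbalancing only raises $\max(p+1,q)$, which lowers $\pd$. If you keep $\PT=n-2$, then at $n=9$ you would get $7-4=3$ for $S_{1,3,4}$, switch to $S_{1,2,5}$, compute $7-5=2$, and stop. In truth $\pd(S_{1,2,5})=6-5=1$, and it is the balanced $S_{1,3,4}$ that has $\pd=2$.

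\textbf{The case $n=5$.} Your hand check will show that $S_{1,1,2}$ has $\PT=3$ (from the pair of leaves at $c$, where the count $n-2$ is actually correct) and $\pt=2$. So $\pd=1\neq 0$, and the spider family only works for $n\ge 6$. You must choose $G_5$ differently, e.g.\ $P_5$, which has $\pd=0$. The paper's table overlooks this case as well, since $v_y=v_1$ when $n=5$.
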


\begin{proof}
    Let $G_n$ be the spider graph with three legs of length $\{1, \left\lfloor \frac{n}{2}\right\rfloor -1, \left\lceil \frac{n}{2}\right\rceil -1\}$ for $n \geq 5$, as seen in Figure \ref{spider}. Let the center vertex be $c$ and enumerate the vertices of each leg as $\{u_1\}$, $\{v_1,..., v_y\}$, $\{w_1,..., w_z\}$, where $y = \left\lfloor \frac{n}{2}\right\rfloor -1$ and $z = \left\lceil \frac{n}{2}\right\rceil -1$. This infinite family $\mathcal{G}$ was adapted from a similar graph in \cite[Figure 3]{hogben}.

    \begin{figure}[H]
\begin{center}
    \begin{tikzpicture}[scale=.65, transform shape]
    \node[main node, fill=white] (1) {$w_z$};
    \node[main node, fill=white] (2) [right = 1cm of 1] {$w_2$};
    \node[main node, fill=white] (3) [right = 1cm of 2] {$w_1$};
    \node[main node, fill=white] (4) [right = 1cm of 3] {$c$};
    \node[main node, fill=white] (5) [right = 1cm of 4] {$v_1$};
    \node[main node, fill=white] (6) [right = 1cm of 5] {$v_y$};
    \node[main node, fill=white] (7) [above = 1cm of 4] {$u_1$};

    \path[draw,thick]
    (2) edge node {} (3)
    (3) edge node {} (4)
    (4) edge node {} (5)
    (4) edge node {} (7);
    
    \path (1) -- node[xshift=0cm] {$\dots$} (2);
    \path (5) -- node[xshift=0cm] {$\dots$} (6);
    
    \end{tikzpicture}

    \vspace{5pt}
    \caption{A spider graph $G_n$ satisfying $\pd(G) = \left\lceil  \frac{n}{2} \right\rceil -3$.}
    \label{spider}

\end{center}
\end{figure}
    
    First observe that $Z(G_n) = 2$, since $\{u_1, w_z\}$ is a Z-set and $G_n \ncong P_n$. We present the following table to verify the maximum and minimum propagation times over all the minimum zero forcing sets. Each zero forcing set either needs to be two leaves, or a leaf and a vertex adjacent to the center such that these two vertices are on separate legs. Thus, the following list is comprehensive.

    \begin{table}[H]
    \centering
    \caption{Minimum Z-sets and their propagation times.}
    \vspace{8pt}
    \label{allsets}
    \begin{tabular}{l c}
        \toprule
        \textbf{Z-set} & \textbf{Propagation Time} \\
        \midrule
        $\{u_1, w_z\}$ & $n-3$ \vspace{3pt}\\
        $\{u_1, v_y\}$ & $n-3$ \vspace{3pt}\\
        $\{v_y, w_z\}$ & $\left\lfloor \frac{n}{2} \right\rfloor$ \vspace{3pt}\\
        $\{u_1, w_1\}$ & $\left\lfloor \frac{n}{2} \right\rfloor$ \vspace{3pt}\\
        $\{u_1, v_1\}$ & $\left\lceil \frac{n}{2} \right\rceil$ \vspace{3pt}\\
        $\{w_1, v_y\}$ & $n-3$\vspace{3pt}\\
        $\{v_1, w_z\}$ & $n-3$ \\
        \bottomrule
    \end{tabular}
    \end{table}

    By considering all Z-sets of size 2, we see that $\pt(G_n) = \left\lfloor \frac{n}{2} \right\rfloor$ and $\PT(G_n) = n-3$. The difference results in $\pd(G_n) = \left\lceil \frac{n}{2} \right\rceil -3$.
\end{proof}

The next result follows, using the same family of graphs.

\begin{corollary}\label{allpd}
    For any nonnegative integer $x$, there exists a graph $G_n$ for which $\pd(G_n) = x$.
\end{corollary}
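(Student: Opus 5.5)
The plan is to use the spider family $\mathcal{G}$ from Proposition \ref{spiderprop} for all large values of $x$, and to dispose of the few small values separately with explicit small graphs. By Proposition \ref{spiderprop}, $\pd(G_n) = \left\lceil \frac{n}{2}\right\rceil - 3$ for every $n \geq 5$. As $n$ runs over $5, 6, 7, \dots$, the quantity $\left\lceil \frac{n}{2}\right\rceil - 3$ takes the values $0, 0, 1, 1, 2, 2, \dots$, so it hits every nonnegative integer. Concretely, given $x \geq 0$, we choose $n = 2x + 5$. This gives $n \geq 5$ and $\left\lceil \frac{2x+5}{2}\right\rceil - 3 = (x+3) - 3 = x$. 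Hence the spider $G_{2x+5}$ satisfies $\pd(G_{2x+5}) = x$.

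The key steps, in order, are as follows.
\begin{enumerate}
\item Fix an arbitrary nonnegative integer $x$ and set $n = 2x+5$.
\item Check that $n \geq 5$, so that Proposition \ref{spiderprop} applies to $G_n$.
\item Compute $\left\lceil \frac{n}{2}\right\rceil = x + 3$, which holds because $n$ is odd.
\item Conclude that $\pd(G_n) = x$.
\end{enumerate}
No separate small cases are actually needed, since $x = 0$ is already covered by $n = 5$. As a sanity check on that case, the spider with legs of lengths $1, 1, 2$ gives $\PT = 2$ and $\pt = 2$ from the table in Proposition \ref{spiderprop}.

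The only real obstacle is trusting the table of propagation times in Proposition \ref{spiderprop} at the boundary values of $n$. Its entries use $\left\lfloor \frac{n}{2}\right\rfloor$ and $n-3$, and for very small $n$ some legs have length $1$, so several listed sets coincide. This could shift $\pt$ or $\PT$. I would verify $n = 5$ directly by hand to confirm $\pd = 0$. If any discrepancy appeared there, I would fall back on a path $P_n$ to realize $x = 0$: every minimum Z-set of $P_n$ is a single endpoint, so $\PT = \pt = n-1$ and $\pd = 0$. I would then use the spider family only for $x \geq 1$.
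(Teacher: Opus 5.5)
Your strategy is the paper's: take a spider from Proposition~\ref{spiderprop} with $n$ linear in $x$. The paper chooses $n=2x+6$. Your choice $n=2x+5$ fails at $x=0$, and your sanity check for that case is wrong.

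For $n=5$ the spider has legs $\{u_1\}$, $\{v_1\}$ and $\{w_1,w_2\}$, so $y=1$ and $v_1=v_y$. The minimum Z-set $\{u_1,v_1\}$ needs three steps:
\begin{itemize}
\item step 1: $u_1\rightarrow c$;
\item step 2: $c\rightarrow w_1$ (its only uncolored neighbor);
\item step 3: $w_1\rightarrow w_2$.
\end{itemize}
The other minimum Z-sets $\{u_1,w_1\}$, $\{v_1,w_1\}$, $\{u_1,w_2\}$ and $\{v_1,w_2\}$ all finish in two steps. So $\PT(G_5)=3$, $\pt(G_5)=2$, and $\pd(G_5)=1\neq 0$.

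Your reading of $\PT=2$ off the table overlooks the table's own row for $\{u_1,v_1\}$, which gives $\lceil 5/2\rceil=3$. When $y=1$, that row and the row $\{u_1,v_y\}$ (listed as $n-3=2$) describe the same set, and the correct value is $3$. In other words, Proposition~\ref{spiderprop} is actually false at $n=5$. Its table is only accurate once $y\ge 2$, that is, $n\ge 6$. So your claim that no separate small cases are needed is incorrect.

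Your contingency does repair the argument. Every path has $\pd=0$, since its minimum Z-sets are its two endpoints and each has propagation time $n-1$. For $x\ge 1$, your $n=2x+5\ge 7$ lies where the table is valid, and it gives $\pd(G_n)=(n-3)-\lfloor n/2\rfloor=(2x+2)-(x+2)=x$. The proof therefore goes through only if you actually invoke the path for $x=0$, rather than leaving it conditional on a check you expected to pass.

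The paper avoids the issue entirely with $n=2x+6$. This keeps $n\ge 6$ for every $x\ge 0$, and $\lceil (2x+6)/2\rceil-3=x$.
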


\begin{proof}
    Let $G_n$ be a graph in the family $\mathcal{G}$ from Proposition \ref{spiderprop}, and let $n = 2x+6$. Then \[\pd(G_n) = \left\lceil \frac{2x+6}{2}  \right\rceil -3 = x.\qedhere\]
\end{proof}

\section{Conclusion and Open Problems}\label{conclusion}

In this paper, we studied the maximum discrepancy between various zero forcing parameters. We began in Section \ref{zbar-z} by maximizing the discrepancy between the cardinality of minimal zero forcing sets, introducing an infinite family where $\md(G_n)=n-6$ in Proposition \ref{n-6 family}. This leads us to ask the following questions:

\begin{question}
    Are there other infinite families of graphs satisfying $\md(G_n) = n-6$ besides the family presented in Proposition \ref{n-6 family}?
\end{question}

\begin{question}
    What structural characteristics limit $\md(G)$ to $n-6$ for graphs of order $n\geq17$?
\end{question}

In the beginning of Section \ref{zbar-z}, we noted that there are certain graphs of small order that achieve $\md(G) > n-6$. Our investigation was primarily regarding arbitrarily large order graphs that produce a large discrepancy, specifically of order $n \geq 17$, but we pose the following question:
 
\begin{question}
    What is the largest graph order for which $\md(G) > n-6$?
\end{question}

We have already presented $P_6$ as a trivial example that satisfies $\md(P_6) = n-5$. Regarding graphs of larger order, we see that Proposition \ref{initial bound} implies that all graphs of order $n \geq 7$ satisfy $\md(G) \leq n-4$. For Theorems \ref{cor2} and \ref{Side2}, we bounded the order at some $n \geq 10$ in order to reduce the number of edge cases that needed to be investigated. However, we do suspect that the same result could be proven for $n\geq7$. Thus, we provide a conjecture but leave the question open. 

\begin{conjecture}
    If $G$ is a graph of order $n\geq 7$, then $\md(G) \leq n-6$.
\end{conjecture}

Finally, in Section \ref{pd}, we focused on propagation time discrepancy, finding a universal upper bound on $\pd(G)$. In Theorem \ref{pd upper bound} we also showed that this bound is sharp by introducing an infinite family of graphs that achieves this bound. This leads us to ask the following question about similar families of graphs.

\begin{question}
    Are there other infinite families of graphs satisfying $\pd(G_n)= \left\lfloor n-2\sqrt{n}+1 \right\rfloor$ besides the family presented in Theorem~\ref{pd upper bound}?
\end{question}

We also note that we considered applying the strategies used in this paper to maximize a discrepancy related to the throttling number. The \textit{throttling number} of a graph, denoted $\operatorname{th}(G)$, minimizes the sum of the size of a zero forcing set and its corresponding propagation time. But, defining an upper value for the throttling number, or $\operatorname{TH}(G)$ which instead maximizes the sum, will always result in a value of $n$ which can be achieved by choosing the zero forcing set $S = V(G)$.

Additionally, this paper only considered standard zero forcing, which naturally prompts the following question.

\begin{question}
    Do these bounds on $\md(G)$ and $\pd(G)$ extend to skew forcing? To positive semidefinite forcing?
\end{question}

While certainly not comprehensive, these open questions suggest various avenues for future research, particularly regarding the structure of graphs that achieve a maximum discrepancy. We hope the results presented here are a helpful baseline towards this end, and that the methods developed might be useful in studying these discrepancies under non-standard color change rules.

\section*{Acknowledgments}

This research was conducted as part of the Wheaton College Summer Research Program, with generous support from the Wheaton College Alumni Association and the Wheaton College Provost's Office. We also thank Daniel McKay for his helpful conversations that greatly improved this paper.


\begin{thebibliography}{99}

\bibitem[1]{AIM} AIM Special Work Group, \textit{Zero forcing sets and the minimum rank of graphs}, Linear Algebra Appl. \textbf{428} (2008), no. 7, 1628-1648.

\bibitem[2]{alameda} J. S. Alameda, S. Dillman, and F. Kenter, \textit{Leaky forcing: a new variation of zero forcing}, Australas. J. Combin. \textbf{88} (2024), no. 3, 308–326.

\bibitem[3]{barrett} F. Barioli, W. Barrett, S. Fallat, H. T. Hall, L. Hogben, B. Shader, P. van den Driessche, and H. van der Holst, \textit{Parameters Related to Tree-Width, Zero Forcing, and Maximum  Nullity of a Graph} J. Graph Theory \textbf{72} (2013), no. 2, 146-177.

\bibitem[4]{barioli} F. Barioli, W. Barrett, S. Fallat, H. T. Hall, L. Hogben, B. Shader, P. van den Driessche, and H. van der Holst, \textit{Zero forcing parameters and minimum rank problems} Linear Algebra Appl. \textbf{433} (2010), no. 2, 401-411.

\bibitem[5]{brimkov} B. Brimkov and J. Carlson, \textit{Minimal zero forcing sets}, Australas. J. Combin. \textbf{90} (2024), no. 3, 363-377.

\bibitem[6]{burgarth} D. Burgarth and V. Giovannetti. \textit{Full control by locally induced relaxation}. Phys.
Rev. Lett. \textbf{99} (2007), no. 10, 100501.

\bibitem[7]{butler} S. Butler and M. Young, \textit{Throttling zero forcing propagation speed on graphs}, Australas. J. Combin. \textbf{57} (2013), 65-71.

\bibitem[8]{chilakamarri} K. B. Chilakamarri, N. Dean, C. X. Kang, and E. Yi. \textit{Iteration index of a zero forcing set in a graph}, Bull. Inst. Combin. Appl. \textbf{64} (2012), 57-72.

\bibitem[9]{fast} C. C. Fast and I. V. Hicks. \textit{Effects of vertex degrees on the zero-forcing number and propagation time of a graph}, Discrete Appl. Math. \textbf{250} (2018), 215-226.

\bibitem[10]{haynes} T. Haynes, S. Hedetniemi, S. Hedetniemi, and M. Henning, \textit{Domination in graphs applied to electric power networks}, SIAM J. Discrete Math. \textbf{15} (2002), no. 4, 519-529.

\bibitem[11]{hogben} L. Hogben, M. Huynh, N. Kingsley, S. Meyer, S. Walker, and M. Young, \textit{Propagation time for zero forcing on a graph}, Discrete Appl. Math. \textbf{160} (2012), no. 13-14, 1994–2005.

\bibitem[12]{hogben2} L. Hogben, \textit{Minimum rank problems}, Linear Algebra Appl. \textbf{432} (2010), no. 8, 1961-1974.

\bibitem[13]{ima} IMA-ISU research group on minimum rank, \textit{Minimum rank of skew-symmetric matrices described by a graph}, Linear Algebra Appl. \textbf{432} (2010), no. 10, 2457-2472.

\bibitem[14]{johnson} C. R. Johnson, R. Loewy, and P. A. Smith, \textit{The graphs for which the maximum multiplicity of an eigenvalue is two}, Linear and Multilinear Algebra \textbf{57} (2009), 713–736.

\bibitem[15]{mccarty} R. McCarty, \textit{The Extremal Function and Colin de Verdière Graph Parameter}, Electron. J. Combin. \textbf{25} (2018), no. 2, P2.32.

\bibitem[16]{row} D. Row, \textit{A technique for computing the zero forcing number of a graph with a cut-vertex}, Linear Algebra Appl. \textbf{436} (2012), no. 11, 4423-4432.

\bibitem[17]{yang} B. Yang, \textit{Fast-mixed searching and related problems on graphs}, Theoret. Comput. Sci. \textbf{507} (2013), 100-113.


\end{thebibliography}
\end{document}